\numberwithin{equation}{section}
\def\mathcolor#1#{\@mathcolor{#1}}
\def\@mathcolor#1#2#3{%
  \protect\leavevmode
  \begingroup
    \color#1{#2}#3%
  \endgroup
}
\newcommand{\ig}[2]{\vcenter{\xy (0,0)*{\includegraphics[scale=#1]{fig/#2}} \endxy}}
\newtheorem{thm}{Theorem}[section]
\newtheorem{lem}[thm]{Lemma}
\newtheorem{prop}[thm]{Proposition}
\newtheorem{cor}[thm]{Corollary}
\theoremstyle{definition}
\newtheorem{defn}[thm]{Definition}
\newtheorem{nota}[thm]{Notation}
\newtheorem{exa}[thm]{Example}
\theoremstyle{remark}
\newtheorem{rem}[thm]{Remark}
\newcommand{\nc}{\newcommand} 
\nc{\mb}{\mathbb}
\nc{\mc}{\mathcal}
\nc{\mf}{\mathfrak}
\newcommand{\co}{\colon}
\nc{\ic}{\mathbf{IC}}
    \def\FM{{\mathbb{F}}}
    \def\NM{{\mathbb{N}}}
    \def\ZM{{\mathbb{Z}}}
    \def\EC{{\mathcal{E}}}
\def\HB{{\mathbf H}}    \def\HC{{\mathcal{H}}}
    \def\LC{{\mathcal{L}}}
\def\NB{{\mathbb N}}
\def\QB{{\mathbb{Q}}}    
\def\RB{{\mathbb R}}    
    \def\TC{{\mathcal{T}}}
\def\ZB{{\mathbb Z}}    
\def\FS{{\EuScript F}}
\def\ZS{{\EuScript Z}}
\def\d{\delta}
\def\l{\lambda}
\def\F{{\mathbbm F}}
\def\1{\mathbbm{1}}
\newcommand{\Hom}{{\rm Hom}}
\newcommand{\JW}{{\rm JW}}
\newcommand{\End}{{\rm End}}
\newcommand{\TL}{\TC\LC}
\newcommand{\on}{\operatorname}
\newcommand{\ol}{\overline}
\newcommand{\ul}{\underline}
\newcommand{\sa}{\mathcolor{blue}{s}}
\newcommand{\tr}{\mathcolor{red}{t}}
\DeclareMathOperator{\spn}{span}
\DeclareMathOperator{\supp}{\mathrm{supp}}
\newcommand{\Zvv}{\ZB[v^{\pm 1}]}
\newcommand{\sumset}{\stackrel{\scriptstyle{\oplus}}{\scriptstyle{\subset}}}
\newcommand{\SD}{\mathbb S \mathcal D}
\def\1{\mathbbm{1}}
\title{$p$-Jones-Wenzl idempotents}
\author{Gaston Burrull, Nicolas Libedinsky, and Paolo Sentinelli}
\begin{document}
\begin{abstract}
For   a prime number $p$ and any natural number $n$ we introduce, by giving an explicit recursive formula, the $p$-Jones-Wenzl projector ${}^p\JW_n$, an element of the Temperley-Lieb algebra $TL_n(2)$ with coefficients in $\FM_p$. We prove that these projectors  give
the indecomposable objects in the $\tilde{A}_1$-Hecke category over
$\FM_p$, or equivalently, they give the projector in $\mathrm{End}_{\mathrm{SL}_2(\ol{\FM_p})}((\FM_p^2)^{\otimes n})$ to the top tilting module. The way in which we find these projectors is by categorifying the fractal appearing in the
expression of the $p$-canonical basis in terms of the Kazhdan-Lusztig basis for $\tilde{A}_1$.\end{abstract}
\maketitle

\section{Introduction}

\subsection{A new paradigm} In recent years a new paradigm has emerged in  modular representation theory. The central role that   the canonical  basis of the Hecke algebra (and its associated Kazhdan-Lusztig polynomials) was believed to play  is now known to be played by the $p$-canonical basis (and its associated $p$-Kazhdan-Lusztig polynomials). The most groundbreaking papers in this direction are (in our opinion) the paper by Williamson  \cite{W} commonly known as ``Torsion explosion''  (that broke down the old paradigm), the  paper by Riche and Williamson \cite{RW} known as the ``Tilting manifesto''  (that crystallized the emerging philosophy) and the recent paper by   Achar,  Makisumi, Riche, and Williamson \cite{AMRW19} (that proved the combinatorial part of the conjecture in the tilting manifesto).

But although this brought a new scenario into place,  there was a widespread feeling that the $p$-canonical basis was impossible to calculate (if it is not by complicated categorical manipulations).  But this belief was again annihilated by the beautiful conjecture by Lusztig and Williamson  known as the ``billiards conjecture'' \cite{LW}, where they conjecture a way in which  the $p$-canonical basis (for the anti-spherical module) in type $\tilde{A}_2$ can be calculated for some finite (but big) family of elements. It is with the intention of continuing on this path that this paper comes into existence.

\subsection{The \texorpdfstring{$SL_2$}{SL2} case}
Let us consider  type $\tilde{A}_1$ (the infinite dihedral group). In this case it is easy (and known since the dawn of the theory) to obtain an explicit formula for the canonical basis.  In the paper   \cite{Eli16}, Elias lifted the canonical basis  to a categorical level in the  $\tilde{A}_1$-Hecke category over a field of characteristic zero. He obtained that the Jones-Wenzl projectors give the indecomposable objects. More precisely, there is a functor from the Temperley-Lieb category to the diagrammatic Hecke category such that the images of the Jones-Wenzl projectors give idempotents in the Bott-Samelson objects projecting to the indecomposable objects. 

The main result of this paper is an analogous result, but for fields of positive characteristic. The $p$-canonical  basis of $\tilde{A}_1$  was known since the year 2002 by the work of Erdmann and Henke \cite{EH02} (the group $SL_2$ is the only semi-simple group for which all tilting characters are known). When one expresses this basis in terms of the canonical basis one obtains a fractal-like structure (see Section \ref{cat}). We  lift  this construction to a categorical level and obtain what we call the $p$-Jones-Wenzl projectors with  recursive formulas as explicit as in the usual Jones-Wenzl projectors.

We would like to remark that the formulas for the projectors in the characteristic zero case  were not so surprising as they already appear in the Temperley-Lieb algebra. The formulas found in this paper are completely new. The most challenging and time-consuming part of the present work was to find the correct definition of the $p$-Jones-Wenzl projectors. 

\subsection{Perspectives} There are at least four possible applications of  our construction, the first one being our main motivation for this work.

\begin{enumerate}
\item Using Elias Quantum Satake \cite{ElQS} and Elias triple clasp expansion \cite{LLCC}, together with the main result of Williamson's thesis \cite{WSSB} one is not far from completely understanding the projectors giving the indecomposable objects in type $\tilde{A}_2$ over a field of characteristic zero.  The recursive formula for the Jones-Wenzl projector is built into the recursive formula for $\mathfrak{sl}_3$ (see Formulas $(1.7)$ and $(1.8)$ of \cite{LLCC}). So, as we have a $p$-analogue of this part of the formula, we would just need a $p$-analogue of the other part. If that was achieved, one would probably have the $p$-canonical basis for the whole $\tilde{A}_2$ (at least conjecturally). Of course, this might go far beyond $\tilde{A}_2$, but as the rank grows, the amount of information obtained via Quantum Satake diminishes gradually. In any case, if this approach works, it will give a good chunk of information in any rank.

\item The Jones-Wenzl projector $\JW_n$ is an endomorphism of the $n$-fold tensor product $V^{\otimes n}$ of the natural representation of the quantum group $U_q(\mathfrak{sl}_2)$ projecting onto the maximal simple module. One would like to obtain a projector satisfying the same property, but when $q$ is  a root of unity. Our $p$-Jones-Wenzl projector is certainly not the answer to this question, but might be an important ingredient.

\item In the same vein as the last point, it would be desirable to get the underlying quiver for $\mathrm{Tilt}_0(\mathrm{SL}_2)$ in prime characteristic, following the approach of \cite{RW} using the methods in \cite{AT} (the latter calculate the quiver in the root of unity case using the Jones-Wenzl projector).

\item The Jones-Wenzl projectors play a key role in the definition of Reshetikhin-Turaev $3$-manifold invariants. It is appealing to replace in that definition the Jones-Wenzl projector  by the $p$-Jones-Wenzl projector and see if one obtains a family of invariants (indexed by all primes $p$)  of some kind of object. For example, could it be that if one does this process to the colored Jones polynomial one obtains a family of invariants of framed links (necessarily more refined than the usual colored Jones polynomial)?

\item The $p$-Jones-Wenzl idempotents give the indecomposable objects in the $\tilde{A}_1$ Hecke category for the  realization obtained from the Cartan matrix with $2$ in the diagonal entries and $-2$ in the off-diagonal  entries. It would be interesting to find the idempotents corresponding to the indecomposable objects  for any realization (the $p$-Jones-Wenzl idempotents will not longer be the answer in general). In the same vein, using the ideas in \cite{EL}, one should be able to prove that stacking  $p-$Jones-Wenzl projectors side by side (as in \cite[Proposition 2.22]{EL}) one can obtain the indecomposable objects for any element of any Universal Coxeter system (again, with the realization obtained from the Cartan matrix with $2$ in the diagonal  entries and $-2$ in the off-diagonal  entries).

\end{enumerate}

\subsection{Acknowledgements} The second author presented these results at the conference ``Categorification and higher representation theory'' in July 2018 at the Mittag Leffler institute. He
would like to thank the organisers for the opportunity to discuss this construction and the participants (especially Jonathan Brundan, Daniel Tubbenhauer and Marko Stosic) for useful comments. The first author would like to thank Antonio Behn for helping him with key calculations in SAGE that  allowed us to guess the formula of the idempotents defined in this paper. We would also like to thank Geordie Williamson for interesting discussions.

The first author was supported by Beca Chile Doctorado 2017. The second author was supported by
Fondecyt 1160152 and Proyecto Anillo ACT 1415 PIA-CONICYT. The third author was supported
by Postdoctorado Fondecyt-Conicyt 3160010.




\section{Definition  of the \texorpdfstring{$p$}{p}-Jones-Wenzl idempotents}\label{pJW}

\subsection{The generic Temperley Lieb category}\label{TL}

Let  $m,n\in \NB_0 = \{0, 1, 2, \dots\}$ be such that  $n-m$ is even. An $(m,n)$-\emph{diagram} consists of the following data: 
\begin{enumerate}
\item  A closed rectangle $R$  in the plane with two opposite edges designated as top and bottom.
\item $m$  marked points (vertices) on the top edge and $n$ marked points on the bottom edge.
\item $(n+m)/2$ smooth curves (or ``strands") in $R$ such that for each curve
$\gamma$, $\partial \gamma = \gamma \cap \partial R$ consists of two of the $n+m$  marked points, and such that the curves are pairwise non-intersecting.
\end{enumerate}
Two such diagrams are {\em equivalent} if they induce the same pairing of the $n+m$ marked points. We call a $(m,n)$-\emph{crossingless matching} one such equivalence class.

Let $\delta$ be an indeterminate over $\QB$. The {\em generic Temperley Lieb category}  $\TL(\d)$ (as defined in \cite{GW}) is a strict monoidal category defined as follows. The objects are the  elements of $\NB_0.$ If $m-n$ is odd,  $\Hom(m,n)$ is the zero vector space.  If $m-n$ is even, $\Hom(m,n)$ is the $\QB(\d)$ vector space with basis  $(m,n)$-\emph{crossingless matchings}. The composition of morphisms is first defined on the level of diagrams. The composition
$g\circ f$ of an $(n,m)$-diagram $g$ and an $(m,k)$-diagram $f$  is defined by the following steps:
\begin{enumerate}
\item Put the rectangle of $g$ on top of that of $f$, identifying the top edge of $f$ (with its $m$  marked points) with the bottom edge of $g$ (with its $m$ marked points).
\item Remove from the resulting rectangle any closed loops in its interior.  The result is a $(n,k)$-diagram $h$.
\item The composition $g\circ f$ is $(-\d)^r h$, where $r$ is the number of closed loops removed.
\end{enumerate}
This composition clearly  respects equivalence of diagrams.
The \emph{tensor product of objects} in $\TL$ is given by $n\otimes n' = n+n'$. The {\em tensor product of morphisms}  is defined by horizontal juxtaposition.  With this we end the definition. 

\begin{exa}
Vertical composition in $\TC\LC(\d)$:
\begin{equation*}
\ig{1}{TLC}.\\
\end{equation*}
\end{exa}

Consider the \emph{flip involution}, a contravariant functor $\ol{\phantom{A}}\co \TC\LC(\d) \rightarrow \TC\LC(\d)$ defined as the identity on objects and by flipping the diagrams upside down on morphisms.

For any natural number $n$, the  \emph{Temperley-Lieb algebra on $n$ strands}  is defined to be the $\QB(\d)$-algebra $TL_n(\d):= \End_{\TC\LC(\d)}(n) $.

\begin{exa}
A generator of $TL_{12}(\d)$ as a $\QB(\delta)$-module: $\ \ \ig{1.5}{TL12}$
\end{exa}

\subsection{Jones-Wenzl projectors} \label{jw}
The results in this section are classical. For references, see the celebrated paper by Jones \cite{VJ} where the Jones-Wenzl projectors are introduced. Also see    \cite{We} for the recursion formula below and  \cite{Lick} for further properties.
Let $n$ be a natural number. Let $TL_n(2)$ be the Temperley-Lieb algebra specialised at $\d\rightsquigarrow 2$.

\begin{prop} There is a unique non-zero idempotent $\JW_n\in TL_n(2)$, called the  \emph{Jones-Wenzl projector on $n$ strands}, such that
$$e_i \circ \JW_n = \JW_n \circ e_i =0,$$ for all $1\leq i\leq n-1$, where $e_i=\ig{1}{ei}.$
\end{prop}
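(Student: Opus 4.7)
The plan is to prove existence by an explicit recursive construction and uniqueness by a short planarity argument.

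For existence, I would define $\JW_n$ inductively via Wenzl's recursion
$$\JW_1 = \id_1, \qquad \JW_n = (\JW_{n-1} \ot \id_1) + \alpha_n\, (\JW_{n-1}\ot \id_1)\circ e_{n-1} \circ (\JW_{n-1} \ot \id_1),$$
with $\alpha_n$ a ratio of quantum integers at $\delta = 2$ (well-defined since these specialise to positive integers); here $\JW_{n-1}\ot\id_1$ is viewed as an element of $TL_n(2)$. The inductive step has three parts: (a) for $1 \le i \le n-2$, the identities $e_i \circ \JW_n = \JW_n \circ e_i = 0$ are immediate from the inductive hypothesis $e_i \JW_{n-1} = 0$ since $e_i$ acts away from the rightmost strand; (b) for $i = n-1$, one invokes a partial-trace identity of the form
$$(\JW_{n-1}\ot \id_1) \circ e_{n-1} \circ (\JW_{n-1}\ot \id_1) \circ e_{n-1} = \beta_n\, (\JW_{n-1} \ot \id_1) \circ e_{n-1}$$
for an explicit scalar $\beta_n$, and chooses $\alpha_n$ so that the two contributions to $\JW_n \circ e_{n-1}$ cancel; (c) idempotency $\JW_n^2 = \JW_n$ then follows by expanding $\JW_n^2$ and applying the same identity.

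For uniqueness, the key observation is purely planar: any non-identity crossingless matching $m$ in $TL_n(2)$ must have two consecutive top vertices connected to each other (a \emph{turnback}), for otherwise every top vertex would pair with a bottom vertex and planarity would force $m = \id_n$; a symmetric statement holds for the bottom edge. Consequently every such $m$ admits factorisations $m = e_j \circ m''$ and $m = m' \circ e_i$ in $TL_n(2)$. Given two non-zero idempotents $f_1, f_2$ satisfying the hypotheses, write $f_k = c_k \cdot \id_n + y_k$ with $y_k$ in the span of non-identity matchings. Applying the top factorisation to each matching in $y_2$ and invoking $f_1 \circ e_j = 0$ yields $f_1 \circ y_2 = 0$, hence $f_1 \circ f_2 = c_2\, f_1$. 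Symmetrically, bottom factorisations of the matchings in $y_1$ combined with $e_i \circ f_2 = 0$ give $f_1 \circ f_2 = c_1\, f_2$. Therefore $c_2 f_1 = c_1 f_2$; specialising to $f_1 = f_2$ and using $f_1^2 = f_1 \neq 0$ forces $c_1 = 1$, and symmetrically $c_2 = 1$, so $f_1 = f_2$.

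The main obstacle is the partial-trace identity in (b): one must correctly analyse how the rightmost strand of $\JW_{n-1} \ot \id_1$ interacts with $e_{n-1}$, since this calculation is what fixes $\alpha_n$ and guarantees self-consistency of the recursion. Once it is in hand, the other existence verifications and the entire uniqueness argument are short and rely only on the inductive hypothesis together with the planar combinatorics of non-crossing matchings.
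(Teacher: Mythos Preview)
The paper does not actually prove this proposition: it is stated as a classical result, with the preceding sentence referring the reader to Jones, Wenzl, and Lickorish for proofs and further properties. So there is no ``paper's own proof'' to compare against.

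Your proposal is correct and is precisely the standard argument one finds in those references. The existence half is Wenzl's recursion (which the paper records, without proof, as part (2) of the next proposition), and the partial-trace identity you flag as the main obstacle is exactly the computation that determines $\alpha_n$; at $\delta=2$ the relevant scalar is $\beta_n=-n/(n+1)$ and everything goes through. Your uniqueness half is also the usual one: the planar observation that any non-identity $(n,n)$-matching has both a top and a bottom turnback, hence factors through some $e_j$ on the left and some $e_i$ on the right, is correct, and the short manipulation $c_2 f_1=f_1f_2=c_1 f_2$ together with idempotency then pins down $c_1=c_2=1$ and $f_1=f_2$. (Over $\QB$, the step $(1-c_1)f_1=0\Rightarrow c_1=1$ is immediate since $TL_n(2)$ is free on crossingless matchings.) Nothing is missing at the level of a proof plan.
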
 

It is easy to see that when the $\JW_n$ is expressed in the $\QB$-basis of $(n,n)$-diagrams,  the coefficient of the identity is $1$.

The following proposition adds-up the most important properties of the Jones-Wenzl projectors. We will prove a $p$-analogue of these properties later in the paper.
\begin{prop}\label{most} The Jones-Wenzl projectors satisfy:
\begin{enumerate}
\item \textbf{Absorption.} $\ \ \ig{1}{abs0}.$
\item \textbf{Recursion.} $\ \ \ig{1}{JWQ}.$
\end{enumerate}
\end{prop}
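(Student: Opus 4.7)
The overall plan is to leverage the uniqueness characterization of $\JW_n$ given in the preceding proposition: $\JW_n$ is the unique element of $TL_n(2)$ whose coefficient of $1_n$ is $1$ and which is killed on both sides by every $e_i$. Any element I construct satisfying these three properties is therefore forced to equal $\JW_n$.

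For absorption, fix $a+k+b=n$ and set $E := 1_a\otimes\JW_k\otimes 1_b$. Since the coefficient of $1_k$ in $\JW_k$ equals $1$, and any other basis diagram in $\JW_k$ contains at least one cap on its top edge (hence factors from the top as $e_j\circ(\cdot)$ for some $1\le j\le k-1$), we may write $E = 1_n + x$ where $x$ is a linear combination of diagrams each factoring from the top through some $e_{a+j}$ with $1\le j\le k-1$. Then $\JW_n\circ E = \JW_n + \JW_n\circ x = \JW_n$, since $\JW_n\circ e_{a+j} = 0$. An analogous decomposition using cups on the bottom edge of $\JW_k$ yields $E\circ\JW_n = \JW_n$, which is precisely absorption.

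For the recursion I would argue by induction on $n$. Define
\[
R_n := (\JW_{n-1}\otimes 1) - c_n\,(\JW_{n-1}\otimes 1)\circ e_{n-1}\circ(\JW_{n-1}\otimes 1),
\]
with a scalar $c_n$ to be determined. The coefficient of $1_n$ in $R_n$ equals $1$, since the correction term factors through $e_{n-1}$. For $1\le i\le n-2$ the generator $e_i$ is supported on the first $n-1$ strands, so $e_i\circ(\JW_{n-1}\otimes 1) = (e_i\circ\JW_{n-1})\otimes 1 = 0$ by the inductive killing property, and symmetrically on the right. Thus $R_n$ automatically inherits the first two characterizing properties from $\JW_{n-1}$.

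The content-bearing step is $e_{n-1}\circ R_n = 0$ (and its mirror), and this is what pins down $c_n$. It reduces to computing the scalar $\theta_n$ defined by
\[
e_{n-1}\circ(\JW_{n-1}\otimes 1)\circ e_{n-1} = \theta_n\,e_{n-1};
\]
the equation $e_{n-1} - c_n\theta_n\,e_{n-1} = 0$ then forces $c_n = 1/\theta_n$. At $\delta=2$ the expected value is $\theta_n = n/(n-1)$, yielding $c_n = (n-1)/n$ in agreement with the classical formula. I would establish $\theta_n = n/(n-1)$ by a nested induction, expanding the inner $\JW_{n-1}$ via its own recursion and exploiting absorption to collapse internal idempotents until only scalars and the two-strand cup-cap configurations remain. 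This scalar computation is the only step involving any genuine calculation and is where I expect the main difficulty; everything else is formal once the uniqueness characterization is in hand, and uniqueness then gives $R_n = \JW_n$.
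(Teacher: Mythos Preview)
The paper does not actually prove this proposition: it is recorded as classical, with references to Jones, Wenzl and Lickorish, and the paper only proves the $p$-analogues later (Propositions~\ref{sidem} and~\ref{abs}). So there is no argument in the paper to compare against. Your sketch is the standard route---essentially Wenzl's original inductive construction---and is correct in outline.

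One step should be corrected. The displayed identity
\[
e_{n-1}\circ(\JW_{n-1}\otimes 1)\circ e_{n-1}=\theta_n\, e_{n-1}
\]
is false as written: sandwiching $\JW_{n-1}\otimes 1$ between two copies of $e_{n-1}$ computes the partial trace of $\JW_{n-1}$ on its last strand, which is a scalar multiple of $\JW_{n-2}$, not of the identity. The correct relation is
\[
e_{n-1}\circ(\JW_{n-1}\otimes 1)\circ e_{n-1}=\theta_n\,(\JW_{n-2}\otimes 1_2)\circ e_{n-1}.
\]
This does not damage your argument. Since $e_{n-1}$ commutes with $\JW_{n-2}\otimes 1_2$, composing on the right with the outer $(\JW_{n-1}\otimes 1)$ and using the absorption you have just proved gives
\[
e_{n-1}\circ(\JW_{n-1}\otimes 1)\circ e_{n-1}\circ(\JW_{n-1}\otimes 1)=\theta_n\, e_{n-1}\circ(\JW_{n-1}\otimes 1),
\]
so that $e_{n-1}\circ R_n=(1-c_n\theta_n)\,e_{n-1}\circ(\JW_{n-1}\otimes 1)$ and $c_n=1/\theta_n$ as you want. (Be mindful of the paper's loop convention $e_i^2=-\delta\,e_i$ when you pin down the actual value of $\theta_n$.)

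A smaller point: the uniqueness principle you invoke---``unique element with identity-coefficient $1$ annihilated on both sides by every $e_i$''---is not literally the one the paper states, which is ``unique non-zero \emph{idempotent} annihilated by every $e_i$''. They are equivalent over $\QB$ (where $TL_n(2)$ is semisimple and the two-sided annihilator of the ideal $\langle e_1,\dots,e_{n-1}\rangle$ is one-dimensional), but you should say so, or else verify directly that $R_n$ is idempotent.
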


As an example of the recursion,
\begin{equation*}
\ig{1}{JW3Q}.\\
\end{equation*}

The following equality follows easily from the definitions:
\begin{equation}\label{zero}
\JW_m\circ \Hom_{\TC\LC}(n,m)\circ \JW_n= \begin{cases} \{0\} &\ \mbox{if $n\neq m$}, \\
 \spn_{\QB}\{\JW_n\}  & \mbox{ if $n=m$}.\end{cases}
\end{equation}

\subsection{Definition of the \texorpdfstring{$p$}{p}-Jones Wenzl projectors}\label{pjw}
Let us fix a prime number $p$ for the rest of this paper. We will introduce the $p$-analogue of the Jones-Wenzl idempotents defined above.


\begin{defn} 
If $n\in \NB$ is an integer and $a_ip^i+a_{i-1}p^{i-1}+\cdots +a_1p +a_0$ is the $p$-adic expansion of $n+1$ with $a_i\neq 0$, we define the $p$-\emph{support of} $n$  to be the following set of natural numbers
\begin{equation*}
\supp_p(n)=\{a_ip^i\pm a_{i-1}p^{i-1}\pm\cdots \pm a_1p \pm a_0\}.
\end{equation*}
This set has cardinality $2^{k-1}$ where $k$ is the number of non-zero coefficients in the $p$-adic expansion of $n+1$ (in formulas, $k= \vert  \{j\in \mathbb{Z}\, \vert\, a_j\neq0\}\vert$).
 If $n+1$ has at least two non-zero coefficients in its $p$-adic expansion, we define \emph{the $p$-father of $n$} to be the natural number $f[n]$  obtained by replacing  the right-most non-zero coefficient in the $p$-adic expansion of $n+1$ by zero and then substracting $1$. In formulas, if $n+1=\sum_{i=m}^r a_ip^i$ with $a_m\neq 0$, then $f[n]\coloneqq (\sum_{i=m+1}^r a_ip^i)-1$. If we want to remark the dependence on $p$ we will denote it  by $f_p[n]$ (this will only be done in examples \ref{ex1} and \ref{ex2} at the end of this section). 
If $n+1$ has only one non-zero coefficient in its $p$-adic expansion, then $n+1=jp^i$ for some $0<j<p$ and some $i\in\NM$. In that case, we say that $n$ is a \emph{$p$-Adam} (because it has no father).\end{defn}

\begin{nota}
If $J\subseteq \mathbb{N}$ and $n\in \mathbb{N}$, we will denote $J+n\coloneqq \{j+n\, \vert\, j\in J\}$. We will also denote, when $p$ is clear from the context,  $I_n\coloneqq \supp_p(n)-1$. 
\end{nota}

We fix a prime number $p$. Let us define the  \emph{rational $p$-Jones-Wenzl idempotent on $n$ strands}, denoted by ${}^p\JW_n^{\QB}$.  We will write it down in the following form
\begin{equation}\label{U}
\ig{1}{JWfnQ},
\end{equation}
with  $\l_n^i \in \QB$ and  $p_{n}^i\in \mathrm{Hom}(n,i)$.  We will define $ \l_n^i$ and $  p_n^i $  inductively on the number of non-zero coefficients in the $p$-adic expansion of $n+1$.  

 If $n$ is a $p$-Adam, we define $$\ \ig{1}{JWpJWQ},$$
or to be more precise, as  $\mathrm{supp}_p(n)-1=\{n\},$ we define $\l_n^n=1$ and $p_n^n =\mathrm{id}\in TL_n(2).$

 If $n$ is not a $p$-Adam, 
  let us denote by  $m \coloneqq n-f[n]$.  By induction hypothesis we suppose that $\l_{f[n]}^i$ and $p_{f[n]}^i$ are known. We define
\begin{equation}\label{pJWn}
\ig{1}{pJWnQ},
\end{equation}
Let us be more precise. We have that $$I_n=(I_{f[n]}-m)\sqcup (I_{f[n]}+m).$$ 
We remark that the union is disjoint because  if $a_ip^i+a_{i-1}p^{i-1}+\cdots +a_rp^r $ is the $p$-adic expansion of $n+1$, then $a_rp^r=m$ and
$$\supp_p(n)=\{a_ip^i\pm a_{i-1}p^{i-1}\pm\cdots  - a_rp^r\}  \sqcup \{a_ip^i\pm a_{i-1}p^{i-1}\pm\cdots  + a_rp^r\}.$$

If $i\in  I_{f[n]}$ we have that 
 $$ \l^{i-m}_n=(-1)^m\cdot \frac{i+1-m}{i+1}\l^i_{f[n]}\  , \ \ \l_n^{i+m}=\l^i_{f[n]} \ , $$ 

$$\ig{1}{piminusm} \ \  \   \mathrm{and} \  \ \ \ig{1}{piplusm}.$$
With this we finish the definition of ${}^p\JW_n^{\QB}$.

\begin{nota} \label{NU} Under the conventions above we will denote
$U_{n}^i:=\overline{p_n^i} \circ \JW_i \circ p_n^i$ (where $\overline{p_n^i}$ is the image of $p_n^i$ under the flip involution).
\end{nota}

\begin{thm}\label{main}
For all $n\in \NB$, the morphism ${}^p\JW_n^{\QB}\in TL_n(2)$ is an idempotent. Furthermore, if we express ${}^p\JW_n^{\QB}$ in the $\QB$-basis of crossingless matchings, and write each of its coefficients as an irreducible fraction $a/b$, then $p$ does not divide $b$.
\end{thm}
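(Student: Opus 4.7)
The plan splits into the idempotency claim and the $p$-integrality claim, with equation (\ref{zero}) as the key input for both. Each summand $\lambda_n^i U_n^i = \lambda_n^i\, \overline{p_n^i} \circ \JW_i \circ p_n^i$ factors through $\JW_i$, so in the expansion
$$({}^p\JW_n^{\QB})^2 = \sum_{i,j \in I_n} \lambda_n^i \lambda_n^j \; \overline{p_n^i} \circ \JW_i \circ (p_n^i \circ \overline{p_n^j}) \circ \JW_j \circ p_n^j,$$
equation (\ref{zero}) kills all off-diagonal terms and forces each diagonal term to equal $c_n^i\, U_n^i$, where the scalar $c_n^i \in \QB$ is determined by $\JW_i \circ p_n^i \circ \overline{p_n^i} \circ \JW_i = c_n^i \JW_i$. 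Idempotency is thereby reduced to the family of scalar identities $\lambda_n^i \, c_n^i = 1$ for every $i \in I_n$.

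I would prove those identities by induction on the number of non-zero $p$-adic digits of $n+1$. In the base case $n$ is a $p$-Adam, so $p_n^n = \id$ and $\lambda_n^n = 1$, forcing $c_n^n = 1$ trivially. For the inductive step, the diagrammatic recursion (\ref{pJWn}) expresses $p_n^{i\pm m}$ as a composition involving $p_{f[n]}^i$, and combined with the classical Jones-Wenzl recursion (Proposition \ref{most}(2)) it yields closed expressions for $c_n^{i+m}$ and $c_n^{i-m}$ in terms of $c_{f[n]}^i$; the signed ratio $(-1)^m(i+1-m)/(i+1)$ appearing in the definition of $\lambda_n^{i-m}$ should be precisely the inverse of the scalar produced by that composition, while the $i+m$ case should reduce directly to the inductive hypothesis $\lambda_{f[n]}^i c_{f[n]}^i = 1$.

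The $p$-integrality claim is subtler, because the individual $\lambda_n^i$ typically acquire denominators divisible by $p$ (from the factor $1/(i+1)$ when $p\mid i+1$), and the classical $\JW_i$'s, expanded into crossingless matchings, carry denominators of their own; the content of the theorem is that these cancel within each $\lambda_n^i U_n^i$. I would strengthen the induction by carrying along the statement that ${}^p\JW_n^{\QB}$ lies in the $\ZB_{(p)}$-span of crossingless matchings, and match $p$-adic valuations by pairing each $\lambda_n^{i-m}$ with the coefficients produced when the $\JW_i$-recursion is expanded inside $\overline{p_n^{i-m}} \circ \JW_i \circ p_n^{i-m}$.

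The main obstacle is precisely this $p$-integrality statement: the rigidity of the cancellation is exactly what singles out the correct definition of ${}^p\JW_n^{\QB}$ (as the authors themselves emphasize). A possible alternative route, if the direct denominator tracking becomes unwieldy, is to identify ${}^p\JW_n^{\QB}$ with the projector onto the top tilting summand of $V^{\otimes n}$ for $\mathrm{SL}_2$ over $\ZB_{(p)}$: the existence and uniqueness of that projector integrally, combined with the orthogonality already established in the first step, would force the $\ZB_{(p)}$-integrality.
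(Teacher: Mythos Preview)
Your idempotency argument is correct and matches the paper's Proposition~\ref{sidem} essentially step for step: the orthogonality from (\ref{zero}), the reduction to the scalar identities $\lambda_n^i c_n^i = 1$, and the inductive verification via the recursion (\ref{pJWn}) are exactly what the paper does. The paper packages the $i-m$ scalar computation into a separate lemma (Lemma~\ref{lema lambda}, giving the closed form $\lambda_{n,m} = (-1)^m(n+1)/(n+1-m)$ for capping off $m$ strands through $\JW_n$) rather than re-running the Jones--Wenzl recursion each time, but this is cosmetic.

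The $p$-integrality half is where your proposal diverges, and where there is a genuine gap. The paper does \emph{not} track denominators directly; it runs an indirect categorical argument through the Hecke category $\HC$. Assuming inductively that ${}^p\JW_{f[n]}^{\QB}$ lifts to $\ZB_p$, one reduces it to $\FM_p$ and observes (via the $p$-canonical basis formula, Proposition~\ref{deca1}) that its image under Elias's functor $\FS$ is the indecomposable ${}^pB_{\ul{f[n]+1}}$ in $\HC^{\FM_p}$. Then ${}^pB_{\ul{n+1}}$ is a summand of $\FS(n,{}^p\JW_{f[n]}^{\FM_p}\otimes \id_m)$, so some idempotent $\pi_{\FM_p}$ picks it out; one lifts $\pi_{\FM_p}$ to $\pi_{\ZB_p}$ by idempotent lifting over the complete local ring $\ZB_p$, and then base-changes to $\QB_p$. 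The decisive step---and the one your alternative sketch does not supply---is a \emph{uniqueness} argument: Lemma~\ref{lemma} shows that in the decomposition of ${}^pb_{\ul{f[n]+1}}b_1^m$ the Kazhdan--Lusztig constituents of ${}^pb_{\ul{n+1}}$ each appear with multiplicity one and are disjoint from all other constituents, so over $\QB_p$ there is exactly one idempotent in $\End\big(\FS(n,{}^p\JW_{f[n]}\otimes\id_m)\big)$ whose image categorifies ${}^pb_{\ul{n+1}}$. That forces $\pi_{\ZB_p}\otimes_{\ZB_p}\QB_p = {}^p\JW_n^{\QB}$, whence the integrality.

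Your alternative route via tilting modules is in the right conceptual neighbourhood, but ``existence and uniqueness of that projector integrally'' is not free: a projector onto an indecomposable summand is unique only up to conjugation, and you still need a mechanism identifying the abstractly produced integral projector with the \emph{specific} rational element ${}^p\JW_n^{\QB}$. The paper's mechanism is precisely the pair (absorption, Proposition~\ref{abs}) $+$ (multiplicity one, Lemma~\ref{lemma}), which together pin down ${}^p\JW_n^{\QB}$ inside the smaller endomorphism ring where uniqueness holds. Without that multiplicity-one lemma and the $\FM_p\to\ZB_p\to\QB_p$ bootstrap, your alternative does not close the loop. Your primary approach---directly matching the $p$-adic valuation of $\lambda_n^i$ against the denominators of $\JW_i$---may well be feasible, but the paper neither attempts nor needs it, and you have not given enough detail to assess whether the cancellation really goes through term by term.
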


We remark that in the definition of the Temperley-Lieb algebra one could have used any other commutative ring $\RB$ instead of $\QB$. We denote by $TL_n(2)_{\RB}$ the corresponding algebra.
Now we can state the main definition of this paper. 
\begin{defn}
We define the \emph{$p$-Jones-Wenzl projector on $n$-strands} ${}^p\JW_n\in TL_n(2)_{\FM_p}$  as the expansion of
 ${}^p\JW_n^{\QB}\in TL_n(2)$ in the $\QB$-basis of crossingless matchings  but replacing each of the coefficients $a/b$ (expressed as irreducible fractions) by  ${\ol{a}\cdot}\left(\ol{b}\right)^{-1}\in\FM_p$, where the bar means  reduction modulo $p$.
\end{defn} 

\begin{rem} 
A  more elegant way to define the $p$-Jones Wenzl projector is to lift ${}^p\JW_n^{\QB}\in TL_n(2)_{ \QB}\subset TL_n(2)_{\QB_p}$ to an idempotent in $  TL_n(2)_{\ZB_p}$ and then project to $  TL_n(2)_{\FM_p}$.
\end{rem}

\begin{rem}
One can define an analogue of ${}^p\JW_n^{\QB}\in TL_n(2)$  in the generic Temperley-Lieb algebra $TL_n(\d)$. This is done  by replacing natural numbers by quantum numbers in the coefficients of the formula. For instance,  $(i+1-m)/(i+1)$ must be changed  by $[i+1-m]_q/[i+1]_q\in\QB(\d)$. The projectors thus defined satisfy all properties in Section \ref{idem} with  essentially  the same proofs. 
\end{rem}

\begin{exa}[Example of a rational $3$-Jones-Wenzl projector]\label{ex1}
Let us compute ${}^3\JW_{10}^{\QB}$. We notice that  $f_3[10]=8$ and that $8$ is a $3$-Adam. Using \eqref{pJWn} we have,
\begin{equation}
\ig{1}{3JW10}.
\end{equation}
\end{exa}

\begin{exa}[Example of rational $2$-Jones-Wenzl]\label{ex2}
To calculate ${}^2\JW_{10}^{\QB}$, first we note that $f_2[10]=9$, $f_2[9]=7$ and $7$ is a $2$-Adam. Using \eqref{pJWn} we have,
\begin{equation}
\ig{1}{2JW9}.
\end{equation}
Using \eqref{pJWn} again we obtain,
\begin{equation}
\ig{1}{2JW10}.
\end{equation}
Note that  ${}^3\JW_{10}^{\QB}$ and  ${}^2\JW_{10}^{\QB}$ are quite different.
\end{exa}


\section{Some properties of the $p$-Jones-Wenzl projectors}\label{demo}
\subsection{}\label{idem}

The following lemma, although simple, will prove to be useful. 
\begin{lem} \label{lema lambda}
  Let $0 \leqslant m \leqslant n$. In $TL_n(2)$ we have the equality
   \begin{equation*}
\ig{1}{lmn},
\end{equation*}
where $\l_{n,m}:=(-1)^{m} \cdot \frac{n+1}{n+1-m}$.
\end{lem}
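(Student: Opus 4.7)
The plan is to prove the lemma by induction on $m$. The base case $m=0$ is immediate: $\lambda_{n,0}=1$, and the claimed diagrammatic equality reduces to the tautology $\JW_n = \JW_n$ (no extra cups/caps on top).

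For the inductive step, the strategy is to show that peeling one ``layer'' off the top of the diagram transforms the coefficient by exactly the multiplicative factor
\[
-\frac{n+2-m}{n+1-m},
\]
so that iterating this procedure produces the telescoping product
\[
\prod_{k=1}^{m}\left(-\frac{n+2-k}{n+1-k}\right) \;=\; (-1)^m\cdot\frac{n+1}{n+1-m} \;=\; \lambda_{n,m}.
\]
The key step is therefore the $m=1$ reduction, which amounts to a classical Wenzl-style computation. I would establish it by substituting the Jones-Wenzl recursion of Proposition~\ref{most}(2) into $\JW_n$, which expresses $\JW_n$ as $\JW_{n-1}\otimes\id$ minus a correction term proportional to $(\JW_{n-1}\otimes\id)\circ e_{n-1}\circ(\JW_{n-1}\otimes\id)$. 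Precomposing and postcomposing with the one-layer diagram from the lemma, the annihilation property $\JW_n\circ e_i=e_i\circ\JW_n=0$ (in the guise of the flipped version $e_i\circ\overline{\JW_n}=0$) kills all but one of the resulting terms, and the surviving term, together with the scalar $2$ produced by a closed loop (since $\delta\rightsquigarrow 2$), combines with the coefficient $-[n-1]/[n]$ from the recursion to yield the required factor $-(n+1)/n$. The absorption property of Proposition~\ref{most}(1) is then used to reassemble the result as a smaller diagram of the same shape, of height $m-1$, enabling the inductive hypothesis to conclude.

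The main obstacle is keeping track of the diagrammatic bookkeeping once the recursion is inserted: after the substitution one must carefully identify which of the expanded terms vanish (because some $e_i$ gets adjacent to $\JW_n$ or $\overline{\JW_n}$) and which survive to contribute. The sign $(-1)^m$ is not mysterious---it arises from the single minus sign in Wenzl's recursion getting picked up once per induction step---but verifying that precisely one term survives, with the right diagrammatic shape to match the inductive hypothesis, is the real content. Once the $m=1$ case is in hand, the induction is purely algebraic: multiply the inductive hypothesis for $m-1$ (for $\JW_n$ with $m-1$ layers, giving $\lambda_{n,m-1}$) by the factor $-(n+2-m)/(n+1-m)$ from the extra layer, and observe that this yields $\lambda_{n,m}$.
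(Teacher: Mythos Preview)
Your inductive architecture coincides with the paper's proof: both argue by induction on $m$, both invoke the multiplicativity $\lambda_{n,m}=\lambda_{n,k}\cdot\lambda_{n-k,m-k}$ to reduce to the single-layer case (you peel the outermost layer, taking $k=m-1$; the paper peels the innermost, taking $k=1$---either choice produces the same telescoping product), and both identify $\lambda_{n,1}=-(n+1)/n$ as the crux. The paper simply quotes this value from \cite[Eq.~(2.8)]{EL} rather than re-deriving it from the recursion.

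Your sketch of the $m=1$ derivation, however, is not right as written. In this paper's convention a closed loop evaluates to $-\delta=-2$, and correspondingly the correction term in the recursion of Proposition~\ref{most}(2) carries a \emph{plus} sign. When you close off one layer after substituting the recursion for $\JW_n$, \emph{neither} of the two terms is annihilated: the $\JW_{n-1}\otimes\id$ term produces a free loop and contributes $-2\,\JW_{n-1}$, while the correction term simplifies by pure isotopy (no $e_i$ ends up adjacent to a full Jones--Wenzl box, so the killing property is never invoked) and contributes $+\tfrac{n-1}{n}\,\JW_{n-1}$. Their \emph{sum} is $-\tfrac{n+1}{n}\,\JW_{n-1}$. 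The mechanism you describe---a single surviving term, with coefficient equal to the loop value times $-[n-1]/[n]$---would give $-2(n-1)/n$, which is not $-(n+1)/n$. So the plan is sound, but the $m=1$ bookkeeping needs to be redone with the correct signs and with the recognition that both recursion terms contribute additively.
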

\begin{proof}
  The first equality is a consequence of Proposition \ref{most}; moreover, from \eqref{zero} we can deduce the existence of some coefficients $\l_{n,m}\in \QB$ satisfying the second equality. We only need to calculate these $\l_{n,m}$ to finish the proof. 
  Let us observe that $\l_{n,0}=1$ and $\l_{n,m}=\l_{n,k}\cdot \l_{n-k,m-k}$, for all $0 \leqslant k \leqslant m$. We prove the result by induction on $m$. For $m=1$ we have that $\l_{n,1}=-(n+1)/n$, by \cite[Eq. $(2.8)$]{EL}. Let $m>1$. By our inductive hypothesis we obtain
\begin{equation*}
\l_{n,m}=\l_{n,1}\cdot \l_{n-1,m-1}=-\frac{(n+1)}{n} \cdot \frac{(-1)^{m-1}\cdot n}{n-(m-1)}=(-1)^m \cdot \frac{n+1}{n+1-m}.
\end{equation*}\end{proof}



\begin{prop}\label{sidem}
The element $^p\JW_n^{\QB}\in TL_n(2)$ is an idempotent. Moreover,  $\{\lambda_n^i U_n^i\}_{i\in I_n}$ is a set of mutually orthogonal idempotents. 
\end{prop}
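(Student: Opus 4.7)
The two statements are linked: granted the second, expanding
\begin{equation*}
\bigl({}^p\JW_n^{\QB}\bigr)^2 = \sum_{i,j\in I_n}\lambda_n^i\lambda_n^j\,U_n^iU_n^j
\end{equation*}
and using orthogonality together with the idempotency of each $\lambda_n^i U_n^i$ collapses the sum to $\sum_{i}\lambda_n^i U_n^i = {}^p\JW_n^{\QB}$. So the first claim reduces to the second.

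The orthogonality is immediate from \eqref{zero}: for $i\neq j$,
\begin{equation*}
U_n^i U_n^j = \overline{p_n^i}\bigl(\JW_i\circ p_n^i\overline{p_n^j}\circ \JW_j\bigr)p_n^j,
\end{equation*}
and the parenthesised morphism sits inside $\JW_i\circ\Hom(j,i)\circ \JW_j = \{0\}$. For self-squaring, observe that $\JW_i\circ p_n^i\overline{p_n^i}\circ \JW_i$ lies in $\JW_i\circ\End(i)\circ \JW_i=\QB\cdot \JW_i$ by \eqref{zero}, so there exists a scalar $c_n^i\in\QB$ with $\JW_i\,p_n^i\overline{p_n^i}\,\JW_i = c_n^i\,\JW_i$, whence $(U_n^i)^2 = c_n^i\,U_n^i$. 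The required equality $(\lambda_n^i U_n^i)^2 = \lambda_n^i U_n^i$ is therefore equivalent to the scalar identity $\lambda_n^i\,c_n^i = 1$.

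I will prove $\lambda_n^i c_n^i = 1$ by induction on the number of non-zero digits in the base-$p$ expansion of $n+1$. When $n$ is a $p$-Adam the only index is $i=n$, with $p_n^n=\id$ and $\lambda_n^n=1$, so the claim reduces to $\JW_n\circ \JW_n = \JW_n$. For the inductive step, set $m=n-f[n]$ and fix $i\in I_{f[n]}$. For the index $i+m$, the recursive definition presents $p_n^{i+m}$ as $p_{f[n]}^i$ tensored with $\id_m$ on the right; applying absorption (Proposition \ref{most}(1)) twice to insert and then remove $\JW_i\otimes\id_m$ inside the sandwich, and using the inductive hypothesis to reduce the inner block $\JW_i\,p_{f[n]}^i\overline{p_{f[n]}^i}\,\JW_i$ to $c_{f[n]}^i\,\JW_i$, yields $c_n^{i+m} = c_{f[n]}^i = (\lambda_{f[n]}^i)^{-1} = (\lambda_n^{i+m})^{-1}$. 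For the index $i-m$, the recursive definition inserts $m$ cups below and $m$ caps above the block $p_{f[n]}^i\otimes \id_m$; the same absorption and induction reduces the inner block to $c_{f[n]}^i\,\JW_i$, and the remaining diagram becomes precisely the configuration of Lemma \ref{lema lambda} applied with $n\rightsquigarrow i$, which produces an additional factor $\lambda_{i,m}=(-1)^m(i+1)/(i+1-m)$. Hence $c_n^{i-m} = \lambda_{i,m}\cdot c_{f[n]}^i$, and multiplying by $\lambda_n^{i-m}=(-1)^m\,\frac{i+1-m}{i+1}\,\lambda_{f[n]}^i$ gives exactly $1$.

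The main obstacle is the $(i-m)$-case of the inductive step: one has to check carefully that in the sandwich $\JW_{i-m}\,p_n^{i-m}\overline{p_n^{i-m}}\,\JW_{i-m}$ the $2m$ cup--cap pairs end up pinned between the outer $\JW_{i-m}$'s and the inner $\JW_i$ produced by the inductive reduction, so that Lemma \ref{lema lambda} applies verbatim. Once that planar bookkeeping is in place, the proof is a single application of the lemma followed by the scalar comparison above.
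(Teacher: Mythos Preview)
Your plan is the paper's own argument almost verbatim: induct on the number of non-zero $p$-adic digits of $n+1$, phrase the inductive hypothesis as the scalar identity $\JW_i\,p_{f[n]}^i\overline{p_{f[n]}^i}\,\JW_i=(\lambda_{f[n]}^i)^{-1}\JW_i$ (the paper's \eqref{formulauno}), and then split the step into the $(i+m)$ and $(i-m)$ branches, using Lemma~\ref{lema lambda} for the latter. Orthogonality via \eqref{zero} and the final scalar check $\lambda_n^{i-m}\cdot\lambda_{i,m}\cdot(\lambda_{f[n]}^i)^{-1}=1$ are exactly right.

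The one point to tighten is your justification for the $(i-m)$ branch. You write that ``the same absorption'' inserts $\JW_i\otimes\id_m$ around the inner block $p_{f[n]}^i\overline{p_{f[n]}^i}$. That cannot work: absorption (Proposition~\ref{most}(1)) lets a \emph{larger} Jones--Wenzl absorb a smaller one, and here the only outer projector is $\JW_{i-m}$, which is strictly smaller than $\JW_i$. In the $(i+m)$ branch you legitimately absorbed $\JW_i\otimes\id_m$ into $\JW_{i+m}$; there is no analogous move available from $\JW_{i-m}$.

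What actually makes the $(i-m)$ case go through---and what the paper uses---is that the map $p_n^{i-m}$ already carries a copy of $\JW_i$ in its very definition: the picture defining $p_n^{i-m}$ is ``caps $\circ\,(\JW_i\otimes\id_m)\circ(p_{f[n]}^i\otimes\id_m)$'', not just ``caps $\circ\,(p_{f[n]}^i\otimes\id_m)$''. With that built-in $\JW_i$, the sandwich $\JW_{i-m}\,p_n^{i-m}\overline{p_n^{i-m}}\,\JW_{i-m}$ contains $\JW_i\,p_{f[n]}^i\overline{p_{f[n]}^i}\,\JW_i$ on the nose, \eqref{formulauno} applies directly (no absorption needed), and the remaining $\text{caps}\circ(\JW_i\otimes\id_m)\circ\text{cups}$ is precisely the partial trace computed by Lemma~\ref{lema lambda}. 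This is why the paper cites only ``Equation~\eqref{formulauno} and Lemma~\ref{lema lambda}'' for this branch, with no mention of absorption. Once you correct the source of the inner $\JW_i$, your plan is complete and matches the paper.
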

\begin{proof}
We will prove it by induction in the number of non-zero terms that $n+1$ has in the $p$-adic expansion. If $n$ is a $p$-Adam, then $^p\JW_n^{\QB}=\JW_n$, which is an idempotent. Consider now $n$ not to be a $p$-Adam. We have that $$^p\JW_{f[n]}^{\QB}=\sum\limits_{i\in \supp_p(f[n])-1} \lambda_{f[n]}^i \left(\overline{p_{f[n]}^i } \circ \JW_i \circ p_{f[n]}^i \right).$$  By our inductive hypothesis and  Equation \eqref{zero}, we have that  
\begin{equation}\label{formulauno}
\JW_i \circ p_{f[n]}^i  \circ \overline{p_{f[n]}^i } \circ \JW_i =\frac{1}{\lambda_{f[n]}^i } \JW_i
\end{equation} and
$\JW_i \circ p_{f[n]}^i  \circ \overline{p_{f[n]}^j} \circ \JW_j =0$, for all $i\neq j\in \supp_p(f[n])-1$. Then, absorption and \eqref{formulauno} give
\begin{equation*}
  \ig{1}{idem1},
\end{equation*} 
Equation \eqref{formulauno} and Lemma \ref{lema lambda} give
\begin{equation*}
 \ig{1}{idem2}.
\end{equation*} By \eqref{pJWn}, these two formulas prove the idempotence of each summand in $^p\JW_n^{\QB}$. Since $i\pm m \neq j \pm m$ for all $i,j\in \supp_p(f[n])-1$, $i\neq j$, by \eqref{zero} we finish the proof. 
\end{proof}





\begin{prop}\label{abs}  The idempotent $^p\JW_n^{\QB}$ satisfies the following absorption property:
\begin{equation*}
\ig{1}{abs}.
\end{equation*}
\end{prop}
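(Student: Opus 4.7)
The plan is to exploit the orthogonal idempotent decomposition ${}^p\JW_n^{\QB}=\sum_{i\in I_n}\lambda_n^i U_n^i$ obtained in Proposition \ref{sidem}, together with the factorization $U_n^i=\overline{p_n^i}\circ\JW_i\circ p_n^i$ and the vanishing identity \eqref{zero}. The strategy closely mirrors the proof of idempotence: analyze each summand $U_n^i$ separately under the absorption move, use \eqref{zero} to show that all but one of the summands vanish, and identify the unique surviving summand with the right-hand side of the claimed picture.

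Concretely, for each $i\in I_n$ the absorption datum (a classical $\JW$ placed adjacent to ${}^p\JW_n^{\QB}$, in the spirit of the classical absorption of Proposition \ref{most}) combines with $U_n^i=\overline{p_n^i}\circ\JW_i\circ p_n^i$ to produce an expression of the form $\JW_?\circ h\circ\JW_i$ for some $h\in\Hom(i,?)$. By \eqref{zero} this vanishes whenever $?\neq i$, which kills every summand of index $i<n$. To control the surviving top summand $i=n$, I would unwind the recursion \eqref{pJWn} along the ``$+m$'' branch at every stage, tracking the top index $n\in I_n$ down through $f[n],f[f[n]],\ldots$ until reaching a $p$-Adam. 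An easy induction then shows $p_n^n=\id$ and $\lambda_n^n=\lambda_{f[n]}^{f[n]}=\cdots=1$, whence $U_n^n=\JW_n$. The absorbed expression therefore reduces to a single copy of $\JW_n$, which should match the right-hand side.

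The main obstacle I anticipate is the diagrammatic bookkeeping needed to bring each composition into the form $\JW_?\circ h\circ\JW_i$ in a way that allows \eqref{zero} to be applied. The morphisms $p_n^i$ are built recursively from cups and classical $\JW$-projectors, so one must verify that the outer absorbing projector can be commuted past the outermost layer of $\overline{p_n^i}$. This should follow by an induction on the recursive complexity of $p_n^i$ dictated by \eqref{pJWn}, using classical absorption (Proposition \ref{most}) at each step to handle the cups and projectors introduced at that stage. Once this pass-through is in place, the orthogonality of the $\lambda_n^i U_n^i$ concludes the proof.
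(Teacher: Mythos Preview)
Your proposal misidentifies both the absorbing object and the target of the identity. The absorption property in Proposition \ref{abs} is not that a classical Jones--Wenzl projector absorbs ${}^p\JW_n^{\QB}$ down to a single $\JW_n$; rather it asserts
\[
({}^p\JW_{f[n]}^{\QB}\otimes \id_m)\circ {}^p\JW_n^{\QB} \;=\; {}^p\JW_n^{\QB} \;=\; {}^p\JW_n^{\QB}\circ ({}^p\JW_{f[n]}^{\QB}\otimes \id_m),
\]
as the corollary immediately following the proposition makes explicit (absorption is precisely what exhibits $(n,{}^p\JW_n^{\QB})$ as a direct summand of $(n,{}^p\JW_{f[n]}^{\QB}\otimes\id_m)$). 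In particular the right-hand side is ${}^p\JW_n^{\QB}$, not $\JW_n$, so your conclusion that ``every summand of index $i<n$'' is killed and only $U_n^n=\JW_n$ survives cannot be correct.

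With the correct statement in hand, the paper does use the ingredients you name (the orthogonal decomposition of Proposition \ref{sidem}, classical absorption from Proposition \ref{most}, and the vanishing \eqref{zero}), but the mechanism is different from yours. One expands ${}^p\JW_n^{\QB}$ via the recursion \eqref{pJWn}: each summand is built from a piece $\overline{p_{f[n]}^i}\circ\JW_i\circ p_{f[n]}^i$ of the \emph{father} idempotent together with extra strands and cups/caps. Composing with ${}^p\JW_{f[n]}^{\QB}\otimes\id_m=\sum_j \lambda_{f[n]}^j (U_{f[n]}^j\otimes\id_m)$ then produces cross-terms containing $p_{f[n]}^i\circ\overline{p_{f[n]}^j}$, which vanish for $i\neq j$ by the orthogonality proved in Proposition \ref{sidem} and \eqref{zero}. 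The diagonal terms are reproduced unchanged using classical absorption (Proposition \ref{most}(1)) and the identity $\JW_i\circ p_{f[n]}^i\circ\overline{p_{f[n]}^i}\circ\JW_i=(\lambda_{f[n]}^i)^{-1}\JW_i$ of \eqref{formulauno}. Thus both the $i+m$ and $i-m$ branches of \eqref{pJWn} survive intact; nothing is killed, and the sum reassembles to ${}^p\JW_n^{\QB}$.
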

\begin{proof}
We prove only the second equality, the first one being analogous. Recall Equation \eqref{U} and Notation \eqref{NU} and remark that
\begin{equation*}
\ig{1}{ab1}.
\end{equation*}
By the first part of Proposition \ref{most} and Equation \eqref{formulauno} we have that
\begin{equation*}
\ig{1}{ab2}
\end{equation*}
By \eqref{pJWn} these two formulas prove the proposition. This is because the remaining terms appearing in the expansion of the left-hand side are all zero by Equation \eqref{zero}.

\end{proof}


\section{The Hecke category of \texorpdfstring{$\tilde{A_1}$}{A1}-Soergel bimodules}\label{hecke}

\subsection{Hecke algebra} The \emph{infinite dihedral  group} $U_2$ (of type $\tilde{A_1}$) is the group with  presentation $U_2=\langle s,t:s^2=t^2=e\rangle$. We denote the length function by $\ell$ and the Bruhat order by $\leq$. 
An \emph{expression} is an ordered tuple $\ul{w}=(s_1,s_2,\ldots, s_r)$ of elements of $S$.
We denote by $w\in W$  the corresponding product of simple reflections $w=s_1s_2\cdots s_r$. If $l(w)=r,$ we say that the expression is \emph{reduced}.

Consider the ring $\LC=\Zvv$ of Laurent polynomials with integer coefficients in one variable $v$. 
The \emph{Hecke algebra} $\HB$ of  the infinite dihedral group is the free $\LC$-module with basis $\{H_w\, \vert \, w\in U_2\}$ and multiplication given by:
$$H_wH_s=\left\{
     \begin{array}{ll}
       H_{ws} & \hbox{if $w<ws$;} \\
       H_{ws}+(v^{-1}-v)H_w & \hbox{if $ws<w$,}
     \end{array}
   \right.
 $$ for all $w\in U_2$.
 The set $\{H_w: w\in U_2\}$ is called the \emph{standard basis} of $\HB$. On the other hand, $\HB$ has the \emph{Kazhdan-Lusztig basis} (or \emph{KL-basis}) that we call $\{b_w: w\in U_2\}$. In the literature this basis is also denoted by $\underline{H}_w$ (see \cite{So1}) or $C'_w$ in the original paper by Kazhdan and Lusztig \cite{KL79}. The following formula has an easy proof (all the calculations in the infinite dihedral group are explicit).

\begin{lem}\label{dyerr} Let $(s_1, s_2,\ldots, s_k)$ be a reduced expression of $w\in U_2$ and $r$ a simple reflection. Then
  $$b_wb_r=\left\{
             \begin{array}{ll}
               (v+v^{-1})b_w & \hbox{if $r=s_k$;} \\
               b_{wr}+b_{ws_k} & \hbox{if $k>1$ and $r= s_{k-1}$;}\\
               b_{wr} &\hbox{otherwise.}
             \end{array}
           \right.
  $$
   \end{lem}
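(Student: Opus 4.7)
The plan is to apply the general multiplication rule for the Kazhdan--Lusztig basis by a simple reflection,
\begin{equation*}
b_w b_r = \begin{cases} (v+v^{-1})\, b_w & \text{if } wr < w, \\ b_{wr} + \displaystyle\sum_{\substack{z < w \\ zr < z}} \mu(z,w)\, b_z & \text{if } wr > w, \end{cases}
\end{equation*}
where $\mu(z,w)$ denotes the coefficient of $v$ in the Kazhdan--Lusztig polynomial $h_{z,w}(v)$ in Soergel's normalisation. Case (1) is immediate from the top branch, since $r = s_k$ appearing as the last letter of a reduced expression for $w$ is precisely the condition $wr < w$.

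For the remaining cases we have $wr > w$, and I would evaluate the sum using two explicit features of the infinite dihedral group. First, the Bruhat order on $U_2$ coincides with the length order, $z < w$ iff $\ell(z) < \ell(w)$, because reduced expressions are unique (no braid relations) and subword deletion is transparent. Second, every Kazhdan--Lusztig polynomial in $U_2$ is trivial: $h_{y,w}(v) = v^{\ell(w) - \ell(y)}$ for all $y \le w$. Consequently $\mu(z,w) = 1$ if $\ell(z) = k-1$ and $\mu(z,w) = 0$ otherwise. Since each length $j \ge 1$ in $U_2$ contains exactly two elements, distinguished by whether they end in $s$ or $t$, the condition $zr < z$ selects at most one element of length $k-1$. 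When $k = 1$ the only $z < w$ is $e$, which satisfies $er > e$, so the sum vanishes and we land in case (3). When $k \ge 2$, the assumption $r \ne s_k$ together with $S = \{s, t\}$ forces $r = s_{k-1}$; the unique length-$(k-1)$ element ending in $s_{k-1}$ is then $ws_k = s_1 \cdots s_{k-1}$, and we recover case (2).

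The one genuine ingredient to check is the triviality of the KL polynomials in $U_2$. The cleanest route is to verify that
\begin{equation*}
  \widetilde b_w := \sum_{y \le w} v^{\ell(w) - \ell(y)} H_y
\end{equation*}
is bar-invariant, which reduces to a short telescoping calculation using $\overline{H_s} = H_s + (v - v^{-1})H_e$ and the explicit description of the interval $[e, w]$ recalled above. Since $\widetilde b_w$ lies in $H_w + \sum_{y < w} v\mathbb{Z}[v] H_y$, the uniqueness characterisation of the KL basis identifies it with $b_w$. With that in hand everything else in the proof is elementary bookkeeping, in keeping with the authors' remark that dihedral calculations are completely explicit.
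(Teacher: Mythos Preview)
Your argument is correct. The paper does not actually give a proof of this lemma; it only remarks that ``all the calculations in the infinite dihedral group are explicit'' and leaves the verification to the reader. What you have written is exactly one standard way to make that remark precise: invoke the general multiplication rule $b_w b_r = b_{wr} + \sum_{z<w,\,zr<z}\mu(z,w)b_z$ when $wr>w$, and then exploit the two special features of $U_2$ (Bruhat order equals length order below $w$, and $h_{y,w}=v^{\ell(w)-\ell(y)}$ so that $\mu(z,w)=\delta_{\ell(z),\,\ell(w)-1}$). Your identification of the unique length-$(k-1)$ element with $zr<z$ as $ws_k$ is correct, since the other length-$(k-1)$ element ends in $s_k\neq r$ and hence grows under right multiplication by $r$. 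The bar-invariance check for $\widetilde b_w$ that you sketch is routine and well known.

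An alternative, equally short, route (and perhaps closer in spirit to the paper's ``explicit'' comment) is to bypass the $\mu$-formula entirely: take $b_w=\sum_{y\le w}v^{\ell(w)-\ell(y)}H_y$ as the definition, verify bar-invariance once, and then compute $b_w b_r$ directly in the standard basis using $H_y H_r$; the terms reassemble into $b_{wr}+b_{ws_k}$ by inspection. Both approaches are of comparable length here; yours has the advantage of situating the result inside the general KL machinery.
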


\subsection{The \texorpdfstring{$p$}{p}-canonical basis}\label{pcan}
 Consider the Coxeter system $W=U_2$. Let $\HC$ be the Hecke category  (as defined in \cite{EW16})  over $\ZM$ with a minimal realization obtained from the  Cartan matrix 
 \begin{equation*}
 \left(
\begin{array}{cc}
2 & -2 \\
-2 & 2 
\end{array} \right).
\end{equation*}
This defines a category $\HC^{\Bbbk}$ by base change, for any ring $\Bbbk.$ The following theorem is a summary of the properties of $\HC^{\Bbbk}$ when $\Bbbk$ is a complete local ring \cite[Lemma 6.25, Theorem 6.26 and Corollary 6.27]{EW16}.
\begin{thm}
Let $\Bbbk$ be a complete local ring. 
\begin{itemize}
\item The category $\HC^{\Bbbk}$ is a Krull-Remak-Schmidt $\Bbbk$-linear category with a grading shift functor $(1)$.
\item The indecomposable objects $B_w$ are indexed by $w\in W$ (modulo grading shift) and $B_w\sumset \ul{w}$ is the unique summand of $\ul{w}$ (where $\ul{w}$ is any reduced expression of $w\in W$) that does not appear in any reduced expression $\ul{u}$ with $u\leq w$.  
\item If  $\left<\HC^{\Bbbk}\right>$ denotes the split Grothendieck group of $\HC^{\Bbbk}$, then $\left<\HC^{\Bbbk}\right>$ has a $\Zvv$-algebra structure as follows: the monoidal structure on $\HC^{\Bbbk}$ induces a unital, associative multiplication and $v$ acts via $v\left<B\right> \coloneqq \left<B(1)\right>$ for an object $B$ of $\HC^{\Bbbk}$. Then, there is an isomorphism of $\Zvv$-algebras called the \emph{character}
\begin{equation*}
\mathrm{ch} \co \left<\HC^{\Bbbk}\right> \longrightarrow \HB
\end{equation*}
with $\mathrm{ch}(\left<B_s\right>)=b_s$ for all $s \in S$.
\end{itemize} 
\end{thm}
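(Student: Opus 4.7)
My plan is to reduce each of the three items to the corresponding citation in \cite{EW16}, outlining the strategy for each and noting that the substantive work is internal to that paper. A full self-contained proof would require setting up the diagrammatic Soergel calculus (Bott-Samelson objects, the double-leaves basis, and the Soergel Hom formula), which is far beyond the scope of this expository section; consequently the proof really amounts to a citation with a brief justification that the cited hypotheses are in force.

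For the Krull-Remak-Schmidt property, the key point is that $\HC^{\Bbbk}$ arises as a full subcategory of the Karoubi envelope of the additive $\ZM$-graded hull of the category of Bott-Samelson diagrams with the minimal realization fixed above. By the double-leaves theorem, Hom spaces between Bott-Samelsons are free of finite rank over a polynomial ring $R$ over $\Bbbk$, and concentrated in non-negative degrees. When $\Bbbk$ is complete local, the degree-zero component of any such endomorphism ring is a finite $\Bbbk$-algebra, hence semiperfect, while positive-degree elements lie in the Jacobson radical by a standard graded argument. Lifting of idempotents and unique decomposition into indecomposables then follow; this is \cite[Lemma 6.25]{EW16}.

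For the second item, I would proceed by induction on the length of $w$ using the Bruhat order. For a reduced expression $\ul{w}$, the Hom formula together with the light-leaves filtration produces a unique summand $B_w$ of $\ul{w}$ whose class in the split Grothendieck group is $\langle B_w\rangle \equiv H_w$ modulo the $\Zvv$-span of $\{H_u : u<w\}$; the KRS property then promotes this to uniqueness up to isomorphism of the summand itself, giving \cite[Theorem 6.26]{EW16}. For the third item, the character $\mathrm{ch}$ is defined by sending the Bott-Samelson on a single simple reflection $s$ to $b_s$ and extending multiplicatively and $\Zvv$-linearly, with the diagrammatic relations defining $\HC$ translating into known identities among the $b_s$ in $\HB$ (cf.\ Lemma \ref{dyerr}). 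Injectivity follows from the $\Zvv$-linear independence of $\{\langle B_w\rangle\}_{w\in W}$ inside $\langle \HC^{\Bbbk}\rangle$, and surjectivity follows because $b_w$ lies in $\mathrm{ch}(\langle \HC^{\Bbbk}\rangle)$ modulo lower Bruhat terms, which suffices by a triangular induction. This is \cite[Corollary 6.27]{EW16}.

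The main obstacle is entirely hidden in these citations: the diagrammatic machinery of \cite{EW16}, in particular the Soergel Hom formula and the light-leaves/double-leaves basis, is what makes all three items go through uniformly in the coefficient ring $\Bbbk$. Since the present paper uses the theorem only as a black box to set the stage for the discussion of $B_w$ and the (\,$p$-\,)canonical basis, the plan is simply to invoke the three results above and move on.
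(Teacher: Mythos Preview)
Your proposal is correct and aligns with the paper's own treatment: the paper does not give a proof of this theorem at all, but simply states it as a summary of \cite[Lemma~6.25, Theorem~6.26 and Corollary~6.27]{EW16}. Your outline of the underlying strategy (double-leaves basis, semiperfect degree-zero endomorphism rings, triangularity of $\langle B_w\rangle$ against the standard basis) is more than the paper provides and is accurate as a sketch of what happens inside \cite{EW16}.
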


Another  fundamental theorem by Elias and Williamson \cite{EW14} (conjectured by Soergel)  is the following. 
\begin{thm}In $\HC^{\RB},$ the image of the indecomposable objects are the Kazhdan-Lusztig basis. In formulas: 
 $$\mathrm{ch}( \left<B_w\right>)= b_w.$$
\end{thm}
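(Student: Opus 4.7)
The plan is to induct on $\ell(w)$. The base case $w=e$ is immediate: $B_e$ is the monoidal unit of $\HC^{\RB}$, whose character is $1 = b_e$. For the inductive step, fix a reduced expression $\underline w=(s_1,\ldots,s_k)$ for $w$, and consider the Bott--Samelson object $\underline w = B_{s_1}\otimes\cdots\otimes B_{s_k}$. Since $\mathrm{ch}$ is a ring homomorphism and $\mathrm{ch}(\langle B_{s_i}\rangle) = b_{s_i}$ by the given construction, $\mathrm{ch}(\langle \underline w\rangle) = b_{s_1}\cdots b_{s_k}$. The key point is that, because $(s_1,\ldots,s_k)$ is reduced, iterating Lemma \ref{dyerr} one factor at a time never triggers the $(v+v^{-1})$-case (which requires two consecutive equal letters), so the product expands purely in terms of KL basis elements with constant $\NB_0$-coefficients:
\begin{equation*}
b_{s_1}\cdots b_{s_k} \,=\, b_w + \sum_{u<w} n_u\, b_u, \qquad n_u \in \NB_0.
\end{equation*}

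On the other hand, Krull--Schmidt in $\HC^{\RB}$ gives $\underline w \cong B_w \oplus \bigoplus_{u<w,\,m\in\ZB} B_u(m)^{\oplus a_{u,m}}$, where $B_w$ appears exactly once by its uniqueness characterization. Applying $\mathrm{ch}$ to this decomposition and using the inductive hypothesis $\mathrm{ch}(\langle B_u\rangle)=b_u$ for $u<w$, comparison with the display yields
\begin{equation*}
\mathrm{ch}(\langle B_w\rangle) \,=\, b_w + \sum_{u<w}\Bigl(n_u - \sum_{m\in\ZB}a_{u,m}v^m\Bigr)b_u,
\end{equation*}
so the whole theorem reduces to showing that each bracketed difference vanishes.

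For this final matching I would combine two standard inputs. First, $B_w$ is self-dual, which forces $\mathrm{ch}(\langle B_w\rangle)$ to be bar-invariant; hence each bracketed coefficient is a bar-invariant Laurent polynomial. Second, the uniqueness characterization of $B_w$ as the summand \emph{not} appearing in any smaller Bott--Samelson, combined with the inductive description of $\mathrm{ch}(\langle \underline u\rangle)$ for $u<w$, yields the one-sided degree bound that each coefficient lies in $v\,\ZB[v]$. Bar-invariance together with this degree bound forces each coefficient to be zero, and the multiplicities $a_{u,0}$ are thereby pinned down to equal $n_u$. The main obstacle is the degree bound, which in the general Soergel-bimodule setting is the content of \cite{EW14}; for the infinite dihedral group this is elementary, because Lemma \ref{dyerr} already predicts the exact combinatorics of the Bott--Samelson decomposition, and a short direct induction on $k$ verifies the match between the $n_u$'s and the Krull--Schmidt multiplicities without invoking any Hodge-theoretic input.
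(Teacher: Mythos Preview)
The paper does not prove this theorem; it simply quotes the result from Elias--Williamson \cite{EW14}, noting that it holds in far greater generality. So there is nothing in-paper to compare against beyond that citation.

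Your skeleton is the right one --- induct on $\ell(w)$, decompose the Bott--Samelson via Krull--Schmidt, apply induction to the smaller summands, and combine bar-invariance of $\mathrm{ch}(\langle B_w\rangle)$ with a one-sided degree bound to kill the leftover coefficients --- and you correctly flag the degree bound as the crux (this is exactly the content of \cite{EW14}). The gap is in your dihedral justification of that bound. Neither reason you give works as stated: the uniqueness characterisation of $B_w$ as a summand says nothing about the \emph{graded} multiplicities $a_{u,m}$ of the \emph{other} summands, and the ``short direct induction'' matching the $n_u$ to the Krull--Schmidt multiplicities presupposes exactly the decomposition you are trying to establish. Concretely, bar-invariance only forces $a_{u,m}=a_{u,-m}$; nothing in your argument excludes, for instance, $a_{u,1}=a_{u,-1}=1$ with $a_{u,0}=0$, producing the bar-invariant coefficient $n_u-(v+v^{-1})$, which does not lie in $v\ZB[v]$. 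For $\tilde A_1$ one can indeed avoid Hodge theory, but only with genuine extra input --- e.g.\ the Jones--Wenzl idempotents of \cite{Eli16}, which explicitly realise $B_{w'}\otimes B_{s_k}\cong B_w\oplus B_{w's_{k-1}}$ with no shifts --- and that is not a consequence of Lemma \ref{dyerr} alone.
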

 This result again was proved in much greater generality (for any Coxeter group and for any realization satisfying certain positivity conditions). 
However, in $\HC^{\F_p}$ this is not the case. In this latter category, to emphasize the dependence on $p$, we will denote by  ${}^p B_w$ the indecomposable object. Let us define $$\mathrm{ch}(\left<{}^p B_w\right>):={}^p b_w.$$
The set $\{{}^p b_w\}_{w\in W}$ is another $\Zvv$-basis of $\HB$. It is called the \emph{$p$-canonical basis of $\HB$} (see \cite{JW}).

\subsection{The Jones-Wenzl idempotents as Soergel bimodules}\label{functor}
For the infinite dihedral group, let us color the  set $S=\{{\sa},{\tr}\}$. Let $\TC\LC_c$ be the $2$-colored Temperley-Lieb category as defined in \cite[Section 2.1]{EL} (this is just the generic Temperley-Lieb category defined in Section \ref{TL}, specialised at $\delta\rightsquigarrow 2$,  with regions colored by elements of $S$ in such a way that  adjacent regions always have  different colours). Take a diagram $\EC$ in $\TC\LC(2)$ and colour its regions accordingly using the set $S$  with ${\sa}$ colouring its left-most region. This new diagram ${}^{\sa}\EC$ is a morphism in $\TC\LC_c$. By abuse of notation we will just call it $\EC$. For example,
\begin{equation*}
\ig{1.5}{color}.
\end{equation*}

The following result was proven in \cite[Theorem 5.29]{Eli16}.
\begin{thm}
There is an additive $\QB$-linear (non-monoidal) faithful functor \linebreak  $\FS\colon \on{Kar}(\TC\LC_c)\longrightarrow \HC^{\QB}$, where $\on{Kar}(-)$ is the Karoubi envelope functor. The functor $\FS$ is fully faithful if one only considers degree zero morphisms in $\HC^{\QB}$ and takes the object $(n,\JW_n)$ into the indecomposable object $B_{\ul{n+1}}$, where ${\ul{n}}$ is the unique element $w$ of length $n$ such that ${\sa}w<w$ in $U_2$. 
\end{thm}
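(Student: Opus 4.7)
The plan is to build $\FS$ in three stages: define it on generating objects and morphisms of $\TC\LC_c$, verify the Temperley-Lieb relations so that it descends to a well-defined functor (which then extends to the Karoubi envelope automatically since $\HC^{\QB}$ is already idempotent-complete), and finally analyze the image of the idempotents $\JW_n$ and compare Hom spaces.

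First, I would send the object $n$ of $\TC\LC_c$ (with leftmost region colored $\sa$) to the Bott-Samelson object attached to the reduced expression $\ul{n+1}=\sa\tr\sa\cdots$, of length $n+1$, in $\HC^{\QB}$. The key point is that the $n+1$ alternately colored regions in a diagram on $n$ strands match exactly the $n+1$ simple reflections of $\ul{n+1}$. The generating morphisms of $\TC\LC_c$ are the two-colored cup-cap elements $e_i$, and each such generator should be sent to the unique (up to scalar) degree-zero morphism between the relevant Bott-Samelson objects, built from compositions of ``dot'' morphisms with identity strands in the local model of \cite{EL}. Checking the Temperley-Lieb relations ($e_i^2=2e_i$, $e_ie_{i\pm1}e_i=e_i$, and far commutativity) then reduces to the dot-contraction and barbell relations of the diagrammatic Hecke category; the absence of $2m$-valent vertices in dihedral type $\tilde{A}_1$ (where $m_{\sa\tr}=\infty$) keeps this check manageable.

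The substantive content is the identification $\FS((n,\JW_n))\cong B_{\ul{n+1}}$. I would prove this by induction on $n$, combining the recursion in Proposition \ref{most} with the characterization of $B_{\ul{n+1}}$ recalled in Section \ref{pcan}: namely, that it is the unique indecomposable summand of the Bott-Samelson for $\ul{n+1}$ which does not appear in any Bott-Samelson for a strictly shorter word. The absorption identities $e_i\circ\JW_n=\JW_n\circ e_i=0$ translate, under $\FS$, into the statement that the image of $\JW_n$ is annihilated by every morphism factoring through a Bott-Samelson object for a shorter expression; together with the recursion this pins down the summand uniquely.

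For faithfulness, and for full faithfulness on degree-zero morphisms, I would compare graded ranks. On the TL side, $\Hom_{\TC\LC_c}(n,m)$ has an explicit $\QB$-basis of colored crossingless matchings, whose cardinality is elementary to compute. On the Hecke side, the double leaves basis of \cite{EW16} gives a graded basis of $\Hom_{\HC^{\QB}}(\FS(n),\FS(m))$, whose degree-zero part can be enumerated. The main obstacle, in my view, is this last dimension check: identifying each crossingless matching with a specific degree-zero double leaf, and verifying that the resulting map is bijective, requires a careful and explicit matching of the two combinatorial models, and this is the place where the specific structure of $\tilde{A}_1$ enters decisively.
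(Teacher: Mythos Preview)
The paper does not give its own proof of this theorem: it is stated as a citation of \cite[Theorem 5.29]{Eli16}, with the remark that the definition of $\FS$ is \cite[Definition 5.14]{Eli16}. So there is no proof in the paper to compare your proposal against.

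That said, your outline is broadly the shape of the argument in \cite{Eli16}. Two remarks on the details. First, the generating cup and cap of $\TC\LC_c$ are not sent to compositions of \emph{dot} morphisms: dots have degree $\pm 1$ and change the length of the underlying expression by one, whereas a cup or cap changes the number of strands by two. In Elias's construction the image of a cup/cap is built from the \emph{trivalent} (merge/split) vertices of the diagrammatic Hecke category, which are the degree $\pm 1$ maps $B_sB_s\leftrightarrow B_s$; two of these, one for each color, combine to give the required degree-zero map. Getting this right is what makes the relation check (in particular $e_i e_{i\pm 1} e_i = e_i$) go through. Second, your plan for full faithfulness in degree zero via a bijection between crossingless matchings and degree-zero double leaves is exactly the mechanism used, but the ``careful and explicit matching'' you flag is the entire content of the argument; in \cite{Eli16} this is handled by first establishing the functor on the level of the (non-Karoubi) $2$-colored TL category and Bott--Samelson objects, and then deducing the statement about $\JW_n$ and $B_{\ul{n+1}}$ from the classification of indecomposables on both sides. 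Your inductive argument using absorption and the recursion is a reasonable alternative route to that last identification.
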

The definition of the functor $\FS$ is given in  \cite[Definition 5.14]{Eli16} (see example 5.15 in that paper to get a quick understanding of that definition).  
\subsection{Categorification}\label{cat}
\begin{defn}
 In  \cite{EH02} the authors define the \emph{admissible expansion} of $n\in \mathbb{Z}$ as the unique expansion $n=\sum\limits_{i=0}^l n_ip^i$ with $p-1\leqslant n_i \leqslant 2p-2$ for $i<l$ and $0\leqslant n_l< p-1$. (The uniqueness is proved in \cite[Lemma 5]{EH02}.)
 \end{defn}
 \begin{rem} In \cite{EH02} and in \cite{JW} there is a minor mistake in the definition: they consider $0\leqslant n_l\leqslant p-1$. With that definition the expansion is non-unique: if $n_l=p-1$ one could define $n_{l+1}=0$.
\end{rem}

\begin{lem}\label{tec}
Let $ n=\sum\limits_{i=0}^l n_ip^i$  be written in its admissible expansion. Let $ m=\sum\limits_{i=0}^l m_ip^i$ (not assumed to be written in its admissible expansion). Then $m_i\in \{n_i,2p-2-n_i\}$ for all $i<l$ and $m_l=n_l$ if and only if $m+1\in \supp_p(n)$.
\end{lem}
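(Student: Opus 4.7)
The plan is to reduce the equivalence to two elementary base-$p$ identities. First I would compute the ordinary $p$-adic expansion of $n+1$ starting from the admissible expansion of $n$. Because admissibility forces $n_i \geq p-1$ for every $i < l$, adding $1$ to $n_0$ triggers a cascade of carries that propagates upward, and the condition $n_l < p-1$ absorbs the final carry at position $l$. Writing $r_i := n_i - (p-1) \in [0,p-1]$, the result is
$$n+1 = (n_l + 1)\,p^l + \sum_{i=0}^{l-1} r_i\, p^i,$$
which is the genuine $p$-adic expansion, with leading coefficient $a_l = n_l + 1 \in [1,p-1]$ and digits $a_i = r_i$ for $i < l$. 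By the definition of $\supp_p(n)$, an integer lies in $\supp_p(n)$ if and only if it has the form $(n_l+1)p^l + \sum_{i<l} \epsilon_i r_i\, p^i$ for some sign vector $\epsilon \in \{\pm 1\}^l$.

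The second ingredient is the telescoping identity $\sum_{i=0}^{l-1}(p-1)\,p^i = p^l - 1$. Using it to absorb the $-1$ and one factor of $p^l$ into the lower positions, the equation $m + 1 = (n_l+1)p^l + \sum_{i<l}\epsilon_i r_i\, p^i$ rewrites as
$$m = n_l p^l + \sum_{i=0}^{l-1} \bigl((p-1) + \epsilon_i r_i\bigr)\, p^i.$$
Inspecting the coefficient in position $i < l$, one has $(p-1) + r_i = n_i$ and $(p-1) - r_i = 2p-2-n_i$, so setting $m_i := (p-1) + \epsilon_i r_i$ places each $m_i$ in $\{n_i, 2p-2-n_i\}$, while the top coefficient is $n_l$. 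This proves the forward implication.

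For the converse I would run the same computation in reverse: given $m = n_l p^l + \sum_{i<l} m_i p^i$ with $m_i \in \{n_i, 2p-2-n_i\}$, write $m_i = (p-1) + \epsilon_i r_i$ for a suitable sign $\epsilon_i$ (uniquely determined when $r_i \neq 0$, and with either choice working when $r_i = 0$, so the two ambiguities cancel against each other), and apply the telescoping identity backwards to recover $m+1 = (n_l+1)p^l + \sum_{i<l} \epsilon_i r_i p^i \in \supp_p(n)$. The whole argument is purely a bookkeeping exercise in base-$p$ arithmetic, and I do not anticipate a real obstacle beyond being careful with the carry propagation in the first step.
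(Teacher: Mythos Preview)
Your proof is correct and follows essentially the same route as the paper. Both arguments compute the $p$-adic expansion of $n+1$ from the admissible expansion of $n$ (obtaining digits $r_i = n_i-(p-1)$ for $i<l$ and $n_l+1$ at the top), then use the telescoping identity $\sum_{i<l}(p-1)p^i = p^l-1$ to pass between the expansion of $m$ and that of $m+1$ and match coefficients; your treatment of the converse and the $r_i=0$ ambiguity is slightly more explicit than the paper's, but the substance is identical.
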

\begin{proof}

Let $ a=\sum\limits_{i=0}^l a_ip^i$  be the $p$-adic expansion of the number $a\in \mathbb{N}$. We define $[i]^p(a):=a_i$ for all $i \in \mathbb{N}$.

Consider the following expansion $\mathbb{N}\ni b=\sum\limits_{i=0}^l b_ip^i$ (not assumed to satisfy any property).  We have that 
\begin{equation}\label{expansion}
b+1=[b_0-(p-1)]+\cdots +[b_{l-1}-(p-1)]p^{l-1}+(b_l+1)p^l.
\end{equation}
This equation applied to the $p$-adic expansion of $ n$  gives us that $[i]^p(n+1)=n_i-(p-1)$ for all $i<l$ and $[l]^p[n+1]=n_l+1$.

We have the following easy facts:

\begin{itemize}
  \item $m_i=n_i$ if and only if $m_i-(p-1)=[i]^p(n+1)$ for all $i<l$.
  \item $m_i=2p-2-n_i$ if and only if  $m_i-(p-1)=-[i]^p(n+1)$ for all $i<l$.
  \item $m_l=n_l$ if and only if $m_l+1=n_l+1$.
\end{itemize}
These facts and equation \eqref{expansion} applied to the expansion $ m=\sum\limits_{i=0}^l m_ip^i$ give us
$$m+1\in \{\pm[0]^p(n+1)p^0\pm[1]^p(n+1)p^1 \pm \cdots \pm [l-1]^p(n+1)p^{l-1}+(n_l+1)p^l\},$$
thus finishing the proof. 
\end{proof}

\begin{prop}\label{deca1}The $p$-canonical basis can be expressed in the following way:
$${}^p b_{\ul{n+1}}=\sum_{i\in \supp_p(n)}  b_{\ul{i}}.$$
\end{prop}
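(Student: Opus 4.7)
The plan is to deduce the formula from Erdmann--Henke's description of characters of indecomposable tilting modules for $SL_2$ in characteristic $p$, reformulated in Hecke-algebraic language and combined with Lemma \ref{tec}.

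First I would set up the dictionary. By the tilting manifesto \cite{RW} specialised to $SL_2$ (or, concretely, by the Schur--Weyl/Temperley--Lieb description of $\HC^{\FM_p}$ alluded to in the introduction, namely that ${}^p\JW_n$ cuts out the top tilting summand of $(\FM_p^2)^{\otimes n}$), the indecomposable ${}^p B_{\ul{n+1}} \in \HC^{\FM_p}$ corresponds to the indecomposable tilting module $T(n)$ for $SL_2(\ol{\FM_p})$, while $b_{\ul{i}}$ corresponds on the character level to the Weyl character $\chi(i-1)$. Thus it suffices to compute $[T(n)]$ in the basis of Weyl characters and translate.

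Next I would invoke \cite{EH02}: if $n = \sum_{i=0}^l n_i p^i$ is the admissible expansion of $n$, then
$$[T(n)] = \sum_m \chi(m),$$
where the sum ranges over nonnegative integers $m = \sum_{i=0}^l m_i p^i$ with $m_i \in \{n_i, 2p-2-n_i\}$ for $i<l$ and $m_l = n_l$. Transporting via the dictionary above gives
$${}^p b_{\ul{n+1}} = \sum_m b_{\ul{m+1}}.$$
Finally, Lemma \ref{tec} identifies this set of $m$ with $\{m : m+1 \in \supp_p(n)\}$, so reindexing $i := m+1$ produces the asserted identity.

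The main obstacle is not computational but bookkeeping: one must pin down the dictionary carefully (in particular the index shift between $\ul{n+1}$ on the Hecke side and $T(n)$ on the representation-theoretic side, and between $b_{\ul{i}}$ and $\chi(i-1)$), so that Erdmann--Henke's theorem, stated for tilting modules of a given highest weight, translates cleanly into the stated identity in $\HB$. Once the dictionary is fixed, Lemma \ref{tec}, which was designed precisely to bridge admissible expansions of $n$ with $\supp_p(n)$, handles the combinatorial step automatically.
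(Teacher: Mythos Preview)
Your proposal is correct and follows essentially the same route as the paper: invoke Erdmann--Henke's tilting character formula for $SL_2$, transport it to the Hecke algebra via the known dictionary between tilting characters and the $p$-canonical basis, and then apply Lemma~\ref{tec} to convert the admissible-expansion condition into membership in $\supp_p(n)$. The only cosmetic difference is that the paper cites \cite[Section 5.3]{JW} for the precise dictionary, whereas you sketch it via \cite{RW} and the Schur--Weyl picture; either way the content is the same.
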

\begin{proof}
By \cite[Lemma 6]{EH02} and \cite[Section 5.3]{JW} (where the explicit relation to the $p$-canonical basis is given) this proposition is a rephrasing of Lemma \ref{tec}.
\end{proof}

We will categorify the formula given in Lemma \ref{deca1}. Recall that  $\sum_{i\in \supp_p(n)-1}\l_n^i U_{n}^i$ is the orthogonal decomposition of ${}^p\JW_n^{\QB}$, as in \eqref{U}, where  $U_{n}^i:=\overline{p_n^i} \circ \JW_i \circ p_n^i.$ 

\begin{prop}
In the category $\on{Kar}(\TC\LC_c)$ there is an isomorphism of objects
\begin{equation*}
(n, {}^p\JW_n^{\QB})\cong \bigoplus_{i\in \supp_p(n)-1}(i, \JW_i).
\end{equation*}
\end{prop}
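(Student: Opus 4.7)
My plan is to deduce the desired direct sum decomposition in two stages, exploiting the orthogonal idempotent decomposition of ${}^p\JW_n^{\QB}$ that was already established in Proposition~\ref{sidem} together with the standard properties of the Karoubi envelope.

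\textbf{Step 1: splitting the idempotent.} By Proposition~\ref{sidem}, we have
\[
{}^p\JW_n^{\QB}=\sum_{i\in I_n}\lambda_n^i U_n^i,
\]
where the summands are pairwise orthogonal idempotents in $TL_n(2)$. It is a formal property of any Karoubi envelope of a preadditive category that if $e=\sum_{i}e_i$ is an orthogonal decomposition of an idempotent $e\in \End(X)$, then
\[
(X,e)\;\cong\;\bigoplus_{i}(X,e_i),
\]
with structure maps given by the $e_i$ themselves. Applying this to $X=n$ reduces the proposition to exhibiting, for each $i\in I_n$, an isomorphism
\[
(n,\lambda_n^i U_n^i)\;\cong\;(i,\JW_i)
\]
in $\on{Kar}(\TC\LC_c)$.

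\textbf{Step 2: the key absorption relation.} The proof of Proposition~\ref{sidem} establishes (and indeed requires) the identity
\[
\JW_i\circ p_n^i\circ\overline{p_n^i}\circ \JW_i \;=\;\frac{1}{\lambda_n^i}\,\JW_i
\tag{$\ast$}
\]
for every $i\in I_n$, since this is precisely the statement that $(\lambda_n^i\,\overline{p_n^i}\JW_i p_n^i)^2=\lambda_n^i\,\overline{p_n^i}\JW_i p_n^i$. I will record this as a standalone observation (extracted from formula \eqref{formulauno} of that proof) before using it.

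\textbf{Step 3: constructing the explicit isomorphism.} Define
\[
f \coloneqq \JW_i\circ p_n^i\co n\longrightarrow i,\qquad
g \coloneqq \lambda_n^i\,\overline{p_n^i}\circ \JW_i\co i\longrightarrow n.
\]
Using $\JW_i^2=\JW_i$, one checks immediately that $\JW_i\circ f=f$ and, by ($\ast$), that $f\circ(\lambda_n^i U_n^i)=f$; symmetrically for $g$. Thus $f$ and $g$ are genuine morphisms $(n,\lambda_n^i U_n^i)\to(i,\JW_i)$ and back in $\on{Kar}(\TC\LC_c)$. A direct computation gives
\[
g\circ f=\lambda_n^i\,\overline{p_n^i}\JW_i p_n^i=\lambda_n^i U_n^i,\qquad
f\circ g=\lambda_n^i\,\JW_i p_n^i\overline{p_n^i}\JW_i\stackrel{(\ast)}{=}\JW_i,
\]
which are the identity endomorphisms of $(n,\lambda_n^i U_n^i)$ and $(i,\JW_i)$ respectively, proving the desired isomorphism. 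Combining with Step~1 yields the proposition.

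The only genuinely nontrivial ingredient is the relation ($\ast$); everything else is a bookkeeping exercise in the Karoubi envelope. Since ($\ast$) is built into the orthogonality argument of Proposition~\ref{sidem}, there is essentially no further obstacle — the main care is simply to make ($\ast$) explicit and to choose the scalars so that $f$ and $g$ land in the correct idempotent-truncated hom-spaces.
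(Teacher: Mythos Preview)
Your proof is correct and follows essentially the same approach as the paper: both reduce to showing $(n,\lambda_n^i U_n^i)\cong(i,\JW_i)$ via the explicit maps built from $\JW_i\circ p_n^i$ and $\overline{p_n^i}\circ\JW_i$, with the key input being the relation $\JW_i\,p_n^i\,\overline{p_n^i}\,\JW_i=(\lambda_n^i)^{-1}\JW_i$ (your $(\ast)$, the paper's \eqref{formulauno}). The only cosmetic difference is that the paper places the scalar $\lambda_n^i$ on $f$ rather than on $g$, which of course does not affect the argument.
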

\begin{proof}
Since  $\{\l_n^i U_{n}^i\}_{i\in \supp_p(n)-1}$ is  a set of mutually orthogonal projectors, it is enough to prove that $(n,\l_n^i U_{n}^i)\cong (i,\JW_i)$. Consider the map $$f=\l_n^i(\JW_i\circ  p_n^i): (n,\l_n^i U_{n}^i)\rightarrow (i,\JW_i). $$ By Equation \eqref{formulauno} one can see that $f$ is indeed a map in the Karoubi envelope, i.e., $f=\JW_i\circ f\circ (\l_n^i U_{n}^i).$  It is easy to prove that $g= \ol{p_n^i}\circ \JW_i$ is the inverse of $f$, thus proving the proposition.
\end{proof}
Applying the functor $\FS$ one obtains $$\FS\left((n,{}^p\JW_n^{\QB})\right)\cong \bigoplus_{i\in \supp_p(n)}B_{\ul{i}}.$$ Finally, we decategorify and apply the character $\mathrm{ch}$ defined in Section \ref{pcan} to obtain 
\begin{equation}\label{deca2}
\mathrm{ch} \left(\left\langle \FS\left((n,{}^p\JW_n^{\QB})\right)\right\rangle \right)=\ {}^p b_{\ul{n+1}}.
\end{equation}

\subsection{The absorption property determines the rational $p$-Jones Wenzl projector}


\begin{nota} Consider $m\in \NB$.
If $n$ is even, we denote $ ^pb_{\ul{n}}b_1^m\coloneqq\  ^pb_{\ul{n}}\underbrace{b_sb_tb_s\cdots}_{m\ \mathrm{terms}} $. If $n$ is odd, we denote $ ^pb_{\ul{n}}b_1^m\coloneqq\  ^pb_{\ul{n}}\underbrace{b_tb_sb_t\cdots}_{m\ \mathrm{terms}} $.
\end{nota}


\begin{lem}\label{lemma} If $n\in \NB$ and $m:=n-f[n]$, there is a finite set $K\subset \NB$ such that 
\begin{equation*}
^pb_{\ul{f[n]+1}}b_{1}^{m}= \ ^pb_{\ul{n+1}}+\sum_{k\in K}c_k\,   b_{\ul{k}}
\end{equation*}
where $k\notin \supp_p(n)$ and $c_k\in \NB$ for all $k\in K$.
\end{lem}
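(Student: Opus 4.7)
The plan is to expand ${}^{p}b_{\ul{f[n]+1}}$ via Proposition \ref{deca1} and then iterate Dyer's formula (Lemma \ref{dyerr}) $m$ times.

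First I would write $n+1=\sum_{i=r}^{l}a_ip^i$ in standard $p$-adic form, so that $m=a_rp^r$, $f[n]+1=\sum_{i=r+1}^{l}a_ip^i$, and
$${}^{p}b_{\ul{f[n]+1}} \;=\; \sum_{A\in\supp_p(f[n])} b_{\ul{A}}$$
by Proposition \ref{deca1}. The key preliminary bound is that every $A\in\supp_p(f[n])$ satisfies $A\geq m+1$: indeed, the minimum element of $\supp_p(f[n])$ is $a_lp^l-\sum_{i=r+1}^{l-1}a_ip^i\geq p^l-(p^l-p^{r+1})=p^{r+1}$, whereas $m\leq (p-1)p^r<p^{r+1}$.

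Next I would establish, by induction on $m$, the binomial expansion
$$b_{\ul{A}}\,b_1^{m} \;=\; \sum_{k=0}^{m}\binom{m}{k}\,b_{\ul{A+m-2k}} \qquad\text{whenever } A\geq m+1.$$
At each intermediate stage the accumulated summands $b_{\ul{j}}$ all have one parity which alternates with each multiplication, matching exactly the alternation of letters in $b_1^m$; and the bound $A\geq m+1$ forces the relevant indices $j$ to remain $\geq 2$ at every step before the last. This keeps every multiplication in the second case of Lemma \ref{dyerr}, giving $b_{\ul{j}}\,b_r=b_{\ul{j+1}}+b_{\ul{j-1}}$ throughout. Summing over $A\in\supp_p(f[n])$, the endpoint terms $k=0$ and $k=m$ contribute $\sum_{A}(b_{\ul{A+m}}+b_{\ul{A-m}})=\sum_{j\in\supp_p(n)}b_{\ul{j}}={}^{p}b_{\ul{n+1}}$, using the identity $\supp_p(n)=\{A\pm m:A\in\supp_p(f[n])\}$ from Section \ref{pjw}. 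The remaining terms ($0<k<m$) contribute $\sum_{k\in K}c_k\,b_{\ul{k}}$ with each $c_k$ a positive sum of binomial coefficients, in particular $c_k\in\NB$.

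The main step, and essentially the only substantive obstacle, is verifying $K\cap\supp_p(n)=\emptyset$. Suppose for contradiction that $A+m-2k=A'\pm m$ for some $A'\in\supp_p(f[n])$ and some $0<k<m$. Then $|A-A'|\in\{2k,\,2(m-k)\}\subset(0,2m)$. On the other hand, $A-A'=2\sum_{i=r+1}^{l-1}\eta_i a_i p^i$ for some $\eta_i\in\{-1,0,1\}$; if not all $\eta_i$ vanish, isolating the largest index $j$ with $\eta_j\neq 0$ and estimating the lower terms by $\sum_{i=r+1}^{j-1}(p-1)p^i=p^j-p^{r+1}$ yields $|A-A'|\geq 2p^{r+1}>2m$, a contradiction; while all $\eta_i=0$ gives $A=A'$ and hence $k\in\{0,m\}$, also excluded. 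This completes the plan.
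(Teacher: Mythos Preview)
Your proof is correct and follows the same approach as the paper: expand ${}^{p}b_{\ul{f[n]+1}}$ via Proposition~\ref{deca1}, iterate Lemma~\ref{dyerr} to obtain the binomial expansion $b_{\ul{j}}b_1^m=\sum_{r=0}^m\binom{m}{r}b_{\ul{j-m+2r}}$, and separate the endpoint terms $r=0,m$ to recover ${}^{p}b_{\ul{n+1}}$. Your write-up is in fact more thorough than the paper's own proof, which simply asserts the binomial identity and then states ``since $0<r<m$, this concludes the lemma'' without justifying either the bound $j\geq m+1$ (needed for the iteration to stay in the second case of Lemma~\ref{dyerr}) or the disjointness $K\cap\supp_p(n)=\emptyset$; your $p$-adic estimate $|A-A'|\geq 2p^{r+1}>2m$ makes the latter explicit.
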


\begin{proof}
By Lemma \ref{deca1} we have that $$ ^pb_{\ul{f[n]+1}}=\sum_{j\in J} b_{\ul{j}}$$ with $J:=\mathrm{supp}(f[n]).$

By using $m$ times  Lemma \ref{dyerr} we have,
\begin{align*}
^pb_{\ul{f[n]+1}}b_{1}^{m}&=\sum_{j\in J}b_{\ul{j}}b_{1}^{m}\\
       &=\sum_{j\in J}\sum_{r=0}^{m} {m\choose r} b_{\ul{j-m+2r}}\\
       &=\sum_{j\in \mathrm{supp}(n)} b_{\ul{j}} + \sum_{j\in J}\sum_{r=1}^{m-1} {m\choose r} b_{\ul{j-m+2r}}\\
       &={}^pb_{\ul{n+1}}+ \sum_{j\in J}\sum_{r=1}^{m-1} {m\choose r} b_{\ul{j-m+2r}}.
\end{align*}
since $0<r<m$, this concludes the lemma.
\end{proof}

The following corollary gives an alternative definition of the rational $p$-Jones-Wenzl projector. 

\begin{cor}
The projector  $ \FS\left({}^p\JW_{n}^{\QB}\right)$ is the unique idempotent in 
 the endomorphism ring of the object $  \FS\left( (n, {}^p\JW_{f[n]}^{\QB}\otimes \mathrm{id}_m )\right)  \in \HC^{\mathbb{Q}}$  whose image is isomorphic to that of $ \FS\left({}^p\JW_{n}^{\QB}\right)$ (or, in other words, whose image categorifies $ {}^p b_{\ul{n+1}}$). 
\end{cor}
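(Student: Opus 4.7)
The plan is to combine the absorption property of ${}^p\JW_n^{\QB}$ with a Krull-Schmidt analysis in $\HC^{\QB}$. By Proposition \ref{abs}, the equality ${}^p\JW_n^{\QB} = ({}^p\JW_{f[n]}^{\QB} \otimes \id_m) \circ {}^p\JW_n^{\QB} \circ ({}^p\JW_{f[n]}^{\QB} \otimes \id_m)$ shows that $\FS({}^p\JW_n^{\QB})$ is a well-defined idempotent in $\End\bigl(\FS((n, {}^p\JW_{f[n]}^{\QB} \otimes \id_m))\bigr)$, and by the proposition of Section \ref{cat} its image $\FS((n, {}^p\JW_n^{\QB})) \cong \bigoplus_{i \in \supp_p(n)} B_{\ul{i}}$ categorifies ${}^pb_{\ul{n+1}}$. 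Thus existence is immediate and the content of the corollary is uniqueness.

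To set this up, I would first compute the full Krull-Schmidt decomposition of the ambient object in $\HC^{\QB}$. In $\on{Kar}(\TC\LC_c)$ one has
\begin{equation*}
(n, {}^p\JW_{f[n]}^{\QB} \otimes \id_m) = (f[n], {}^p\JW_{f[n]}^{\QB}) \otimes (m, \id_m);
\end{equation*}
the first factor decomposes as $\bigoplus_{j \in \supp_p(f[n])-1} (j, \JW_j)$ by the earlier proposition, and the classical Temperley-Lieb fusion $(j, \JW_j) \otimes (m, \id_m) \cong \bigoplus_{r} \binom{m}{r} (j + m - 2r, \JW_{j+m-2r})$ (summed over the range with $j+m-2r\geq 0$) lets one push the whole object through $\FS$ and read off its character as ${}^p b_{\ul{f[n]+1}} \cdot b_1^m$. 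By Lemma \ref{lemma} this character equals ${}^pb_{\ul{n+1}} + \sum_{k \in K} c_k b_{\ul{k}}$ with $K \cap \supp_p(n) = \emptyset$, so combining Soergel's theorem over $\QB$ with Krull-Schmidt yields
\begin{equation*}
\FS\bigl((n, {}^p\JW_{f[n]}^{\QB} \otimes \id_m)\bigr) \cong \bigoplus_{i \in \supp_p(n)} B_{\ul{i}} \;\oplus\; \bigoplus_{k \in K} B_{\ul{k}}^{c_k}.
\end{equation*}

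Uniqueness then follows from the structural observation that the summand $A := \bigoplus_{i \in \supp_p(n)} B_{\ul{i}}$ is multiplicity-free and, because $\supp_p(n)$ and $K$ are disjoint, shares no indecomposable constituent with the complementary summand $B := \bigoplus_{k \in K} B_{\ul{k}}^{c_k}$. Invoking the standard vanishing $\Hom^0_{\HC^{\QB}}(B_w, B_y) = 0$ for non-isomorphic indecomposables, the degree-zero endomorphism algebra splits as $\End^0(A) \oplus \End^0(B)$, with $\End^0(A)$ a product of copies of $\QB$ indexed by $\supp_p(n)$; any idempotent with image isomorphic to $A$ must therefore be the element $(1_A, 0)$, which forces it to coincide with $\FS({}^p\JW_n^{\QB})$. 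The main obstacle is to invoke $\Hom^0$-vanishing cleanly in the dihedral diagrammatic Hecke category over $\QB$; this is well-known (from Soergel's Hom formula, or from the self-duality of the KL basis under the bar involution), but needs to be phrased carefully for the object at hand.
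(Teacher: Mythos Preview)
Your argument is correct and follows essentially the same route as the paper's proof: both combine the absorption property (Proposition~\ref{abs}), Lemma~\ref{lemma}, and the fact that $\Hom^0_{\HC^{\QB}}(B_x,B_y)$ is $\QB\cdot\id$ or $0$ according as $x=y$ or not. The paper states these three ingredients in two sentences and leaves the Krull--Schmidt bookkeeping implicit; you have simply written out that bookkeeping (the decomposition of the ambient object as $A\oplus B$ with $A$ multiplicity-free and disjoint from $B$, and the resulting product structure on $\End^0$). The ``main obstacle'' you flag is not one: the $\Hom^0$-vanishing is exactly what the paper invokes in its first sentence.
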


\begin{proof}
We recall  that in $\HC^{\mathbb{Q}}$,  the degree zero part of $\mathrm{Hom}(B_x,B_y)$ is either $\mathbb{Q}\cdot \mathrm{id}$ if $x=y $ or zero if $x\neq y.$ The absorption property (Proposition \ref{abs}) means that $ \FS\left((n,{}^p\JW_{n}^{\QB})\right)$ is a direct summand of $ \FS\left( (n, {}^p\JW_{f[n]}^{\QB}\otimes \mathrm{id}_m )\right)$,
so by  Lemma \ref{lemma} the result follows. 

 
\end{proof}

\subsection{Proof of  Theorem \ref{main} }
\begin{proof}

By abuse of notation, if $b\in \HC^{\ZB}$ we will denote the corresponding object $b\in \HC^{\Bbbk}$, for any ring $\Bbbk$. By construction of the morphism spaces in $ \HC^{\Bbbk},$ double leaves are always a $\Bbbk$-basis of the Hom spaces between Bott-Samelson objects (see \cite{Li1}, \cite{Li2}, \cite{EW16}). So we have that 
\begin{equation}\label{isohom}
 \mathrm{Hom}_{ \HC^{\ZB}}(b, b')\otimes_{\ZB}\Bbbk \cong \mathrm{Hom}_{ \HC^{\Bbbk}}(b,b').
\end{equation}

Let $\FM_p$ be the finite field with $p$ elements, $\ZB_p$ the $p$-adic integers and $\QB_p$ the $p$-adic numbers. The isomorphism \eqref{isohom} gives sense to the following functors 
\begin{itemize}
\item $(-)\otimes_{\ZB_p}\QB_p:  \HC^{\ZB_p}\rightarrow  \HC^{\QB_p}$
\item $(-)\otimes_{\ZB_p}\FM_p:  \HC^{\ZB_p}\rightarrow  \HC^{\FM_p}$
\end{itemize}

\begin{nota}
For the rest of this proof, we will consider objects and morphisms in the Temperley-Lieb category (via the functor $\FS$) as if they were objects and morphisms in the Hecke category. For example, $^p\JW_n^{\QB}$ will be a morphism in $\HC^{\QB}.$
\end{nota}

We need to prove that $^p\JW_n^{\QB}$ seen as a morphism in  $\HC^{\QB_p}$ can be lifted to $ \HC^{\ZB_p}$  along the  functor $(-)\otimes_{\ZB_p}\QB_p$. We will prove it by induction on the number of non-zero coefficients in the $p$-adic expansion of $n+1$.

 If $n$ is a $p$-Adam, then $^p\JW_n^{\QB}=\JW_n$ and $n=jp^i-1$ with $0<j<p$ and $i\in \mathbb{N}$.
When specializing $\delta$ to $2$ the quantum binomial coefficients become ordinary binomial coefficients. Thus, if one applies  \cite[Theorem A.2]{EL} to the $p$-adic integers, one has  that the Jones-Wenzl projector is defined over  $\ZB_p$ if and only if for all $k<n$ the prime number $p$ does not divide the binomial coefficient ${n\choose k}$.

A consequence of Lucas's theorem is that a binomial coefficient ${n\choose k}$ is divisible by a prime number  $p$ if and only if at least one of the base $p$ digits of $k$ is greater than the corresponding digit of $n$. But if $n$ is a $p$-Adam this never happens because the $p$-adic expansion of $n$ is $(p-1)+(p-1)p+\cdots +(p-1)p^{i-1}+(j-1)p^i.$ Thus we conclude that  $\JW_n$ can be lifted to  $\ZB_p.$

Now we suppose that $^p\JW_{f[n]}^{\QB}$ can be  lifted to $ \HC^{\ZB_p}$. We will prove that $^p\JW_{n}^{\QB}$ can also be lifted. Let us say that $^p\JW_{f[n]}^{\ZB_p}$ is this lifting and $^p\JW_{f[n]}^{\FM_p}$  is the image of this morphism under the functor $(-)\otimes_{\ZB_p}\FM_p$.
As for any $b,b'$ objects of ${\HC^{\ZB}}$ we have
$$\mathrm{dim}(\mathrm{Hom}_{\HC^{\FM_p}}(b,b')) =\mathrm{rk}(\mathrm{Hom}_{\HC^{\ZB_p}}(b,b')) = \mathrm{dim}(\mathrm{Hom}_{\HC^{\QB_p}}(b,b')), $$
by  the formula for $\mathrm{ch}$ given in \cite[Definition 6.23]{EW16}, we obtain
at the decategorified level  $$\mathrm{ch}\left(\left\langle(f[n], {}^p\JW_{f[n]}^{\FM_p})\right\rangle \right)= \mathrm{ch}\left(\left\langle (f[n], {}^p\JW_{f[n]}^{\QB})\right\rangle \right)= \ ^pb_{\ul{f[n]+1}}.$$

So $(f[n],{}^p\JW_{f[n]}^{\FM_p})$ is isomorphic to the indecomposable object corresponding to the unique word of length $f[n]+1$  starting with $s$. In formulas  $$\left(f[n],{}^p\JW_{f[n]}^{\FM_p}\right)\cong \ ^pB_{\ul{f[n]+1}}\in \HC^{\FM_p}.$$ Recall that $m=n-f[n].$ The indecomposable object $ ^pB_{\ul{n+1}}$ is  a direct summand of $(n,{}^p\JW_{f[n]}^{\FM_p}\otimes \mathrm{id}_m)$. Let $\pi_{\FM_p}\in \mathrm{End}((n,{}^p\JW_{f[n]}^{\FM_p}\otimes \mathrm{id}_m))$ be the corresponding projector. Since $\mathrm{End}((n,{}^p\JW_{f[n]}^{\FM_p}\otimes \mathrm{id}_m))$ is a finitely generated $\ZB_p$-module, we can use idempotent lifting techniques for complete local rings (see \cite[Proposition 21.34 (1)]{Lam13}) and find
an  idempotent $\pi_{\ZB_p}\in \mathrm{End}((n,{}^p\JW_{f[n]}^{\FM_p}\otimes \mathrm{id}_m))$ mapping to  $\pi_{\FM_p}$.

By applying the corresponding functor one obtains an idempotent $$\pi_{\ZB_p}\otimes_{\ZB_p}\QB_p  \in \mathrm{End}\left((n,{}^p\JW_{f[n]}^{\FM_p}\otimes \mathrm{id}_m)\right)$$
that   decategorifies to $^pb_{\ul{n+1}}$ (just like $\pi_{\mathbb{Z}_p}$ and $\pi_{\mathbb{F}_p}$). But we have seen that $ {}^p\JW_{n}^{\QB}$ is the unique idempotent in 
 the endomorphism ring of $(n,{}^p\JW_{f[n]}^{\FM_p}\otimes \mathrm{id}_m)$ whose image categorifies $ {}^p b_{\ul{n+1}}$. Thus, $\pi_{\ZB_p}\otimes_{\ZB_p}\QB_p =\ {}^p\JW_{n}^{\QB}.$ This implies that ${}^p\JW_{n}^{\QB}$ can be lifted to $\pi_{\ZB_p}$ in $\HC^{\ZB_p}. $

\end{proof}

\bibliographystyle{alpha}
\bibliography{biblio}
\end{document}